\newtheorem{theorem}{Theorem}[section]
\newtheorem{definition}[theorem]{Definition}
\newtheorem{proposition}[theorem]{Proposition}
\newtheorem{corollary}[theorem]{Corollary}
\newtheorem{lemma}[theorem]{Lemma}
\newtheorem{remark}[theorem]{Remark}
\newtheorem*{theorem-non}{Theorem}
\declaretheorem[name=Acknowledgements,numbered=no]{ack}
\theoremstyle{definition}
\newcommand{\Dim}{\textmd{dim}}
\newcommand{\R}{\mathbb{R}}
\newcommand{\Z}{\mathbb{Z}}
\newcommand{\vol}{\text{vol}}
\DeclareMathOperator*{\esssup}{ess\,sup}
\DeclareMathOperator*{\essinf}{ess\,inf}
\begin{document}

\title{Ruelle's inequality in negative curvature} 

\author[F. Riquelme]{Felipe Riquelme}
\date{\today}
\thanks{F.R. was supported by Programa Postdoctorado FONDECYT Proyecto 3170049}
\address{IMA, Pontificia Universidad Cat\'olica de Valpara\'iso, Blanco Viel 596, Cerro Bar\'on, Valpara\'iso, Chile.}

\email{friquelme.math@gmail.com}


\maketitle

\begin{abstract} In this paper we study different notions of entropy for measure-preserving dynamical systems defined on noncompact spaces. We see that some classical results for compact spaces remain partially valid in this setting. We define a new kind of entropy for dynamical systems defined on noncompact Riemannian manifolds, which satisfies similar properties to the classical ones. As an application, we prove Ruelle's inequality and Pesin's entropy formula for the geodesic flow in manifolds with pinched negative sectional curvature.
\end{abstract}

\section{Introduction}

\subsection{Motivation and statements of main results}

\emph{Ruelle's inequality} \cite{Ruelle} is an important result in ergodic theory for smooth dynamical systems relating two fundamental concepts: \emph{measure-theoretic entropy} and \emph{Lyapunov exponents}. It precisely states that if $f:M\to M$ is a $C^1$-diffeomorphism of a compact Riemannian manifold and $\mu$ is an $f$-invariant probability measure on $M$, then the measure-theoretic entropy $h_\mu(f)$ is bounded from above by the sum of the positive Lyapunov exponents, i.e.
\begin{equation}\label{eq:Ruelle}
h_\mu(f) \leq \int \sum_{\lambda_{j}(x)>0} \lambda_{j}(x)\dim(E_{j}(x)) d\mu(x),
\end{equation}
where $\{\lambda_j(x)\}$ is the set of Lyapunov exponents at $x\in M$ and $\dim(E_{j}(x))$ is the multiplicity of $\lambda_j(x)$.

Once inequality \ref{eq:Ruelle} is established, the question about the equality case, known as \emph{Pesin's entropy formula}, arises naturally. For $C^{1+\alpha}$-diffeomorphisms F. Ledrappier and L.-S. Young showed in \cite{LY} that an $f$-invariant probability measure verifies Pesin's entropy formula if and only if it is absolutely continuous along unstable manifolds (see also \cite{Pesin}, \cite{LS} and  \cite{Ledrappier}).

Surprisingly, Ruelle's inequality can fail to be true on noncompact manifolds. To be more precise, in \cite{Riquelme1} we proved that there exist smooth dynamical systems having (arbitrary) positive measure-theoretic entropy whereas there are no positive Lyapunov exponents. This implies in particular that, even for smooth enough dynamical systems, Ruelle's inequality is not always verified when the manifold is not compact. Therefore, it becomes an important question to investigate whether or not this inequality is guaranteed in noncompact situations.

The aim of this paper is to prove that Ruelle's inequality and Pesin's entropy formula are verified for the geodesic flow on manifolds with negative sectional curvature. To be more precise, we show

\begin{theorem}\label{thm1} Let $X$ be a complete Riemannian manifold with dimension at least 2 and pinched negative sectional curvature. Assume that the derivatives of the sectional curvature are uniformly bounded. Then, for every $(g^t)$-invariant probability measure $\mu$ on $T^{1}X$, we have
$$h_{\mu}(g)\leq\int\sum_{\lambda_{j}(v)>0}\lambda_{j}(v)\dim(E_{j}(v))d\mu(v).$$
\end{theorem}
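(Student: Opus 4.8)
The plan is to follow the architecture of Ruelle's original box-counting argument, but to replace the finite partitions available in the compact setting by the noncompact entropy introduced earlier in the paper, and to use the curvature hypotheses solely to recover the uniform derivative bounds that make the counting argument work.

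First I would reduce the continuous-time statement to a discrete one. Since $h_\mu((g^t)) = h_\mu(g^1)$ for the time-one map, and the Lyapunov spectrum of the flow is obtained by integrating the infinitesimal exponents of the derivative cocycle $Dg^t$ along orbits, it suffices to bound $h_\mu(g^1)$ by $\int \sum_{\lambda_j(v)>0}\lambda_j(v)\dim(E_j(v))\,d\mu(v)$, where the exponents are now those of the single map $g^1$. The Oseledets theorem applied to the cocycle over $g^1$ provides, for $\mu$-a.e. $v$, the measurable splitting of $T_v(T^1X)$ together with the exponents $\lambda_j(v)$ and their multiplicities $\dim(E_j(v))$.

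Second, I would exploit the hypotheses --- pinched negative curvature and uniformly bounded derivatives of the sectional curvature --- to establish a uniform bound $\|D_v g^1\| \le C$ for all $v \in T^1X$. This is the key place where noncompactness is tamed: the derivative $Dg^t$ is governed by the Jacobi equation, whose coefficients and their derivatives are controlled by the curvature bounds, so a Gronwall-type estimate gives a bound on $\|D_v g^1\|$ independent of the base point $v$. This uniform bound is precisely what fails for the counterexamples of \cite{Riquelme1}, and it is what permits a compact-style argument on a noncompact space.

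Third, with a uniform derivative bound in hand, I would run the counting estimate. The heart of Ruelle's argument is the combinatorial fact that, for a linear map, the number of cubes of a fixed grid that the image of one cube can meet is controlled by the product of the singular values exceeding $1$, i.e. by the exponential of the sum of the positive logarithmic expansion rates. To transport this to $T^1X$ I would cover a compact set $K$ with $\mu(K) > 1-\delta$ by charts of uniformly bounded geometry, build the associated partition, and bound the entropy of the dynamical refinement using the uniform control on $Dg^1$; letting the grid size tend to zero then yields the integral of the positive exponents. The noncompact entropy of the earlier sections is what allows me to treat such a partition as effectively finite and to discard the $\delta$-mass escaping $K$ without destroying the inequality.

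The main obstacle I anticipate is exactly the interface between the counting argument and the noncompactness of $T^1X$: there is no finite partition into boxes of uniformly bounded diameter, so the classical estimate relating $h_\mu(g^1,\mathcal{P})$ to the number of image boxes must be re-derived for the new entropy and for countable partitions whose mass concentrates on large compact sets. Controlling this tail --- ensuring that the entropy genuinely localizes and that the subadditive counting survives the limit $\delta \to 0$ --- is where the argument will be most delicate, and is the reason the new entropy notion, rather than the classical Kolmogorov--Sinai entropy directly, is the appropriate object to work with.
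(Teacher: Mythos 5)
Your proposal identifies the right difficulty but does not overcome it, so there is a genuine gap at the decisive step. Ruelle's counting argument requires a partition into sets of uniformly small diameter; on the noncompact $T^1X$ every \emph{finite} partition has at least one element of infinite diameter, and the number of partition elements met by the $g^1$-image of that element is not controlled by the singular values of $Dg^1$, so the counting lemma fails precisely for the one element you cannot discard. Passing to countable partitions concentrated on a compact $K$ with $\mu(K)>1-\delta$ does not by itself repair this: you must bound the entropy contributed by orbit segments that leave $K$, and you explicitly defer that step (``where the argument will be most delicate'') rather than prove it. Since \cite{Riquelme1} exhibits smooth noncompact systems with positive entropy and vanishing exponents, this tail control is the whole content of the theorem, not a routine technicality. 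Moreover, a uniform bound on $\|D_vg^1\|$ cannot be the decisive ingredient you claim it is, because that bound already follows from pinched curvature alone (the paper notes that $\|dg^{\pm 1}\|$ is bounded for exactly this reason), while pinched curvature alone is not the full hypothesis of the theorem.

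This points to the second problem: you misplace the role of the assumption that the derivatives of the sectional curvature are uniformly bounded. That hypothesis is not there to bound $Dg^1$; it guarantees H\"older continuity of the strong stable and unstable distributions, hence the local product structure of the Liouville measure, hence the Gibbs property of $\mathcal{L}$ for the geometric potential $F^{su}$ (Proposition \ref{gibbsprop}). The paper's proof uses no box counting at all: it runs the chain $h_\mu(g)\leq \underline{h}^{\delta}_{\mu}(g)\leq h^{\mathcal{L}}_{\mu}(g,K)$ from Theorem \ref{thm0}, bounds $\mathcal{L}(B_n(v,r))$ from below by $C^{-1}\exp\bigl(\int_0^n F^{su}(g^tv)\,dt\bigr)$ for orbits with $v,g^nv\in K$ via the Gibbs property, and concludes with Birkhoff's theorem and the identity $-\int F^{su}\,d\mu=\chi^{+}(\mu)$. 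The restriction to return times $g^nv\in K$ built into the definition of $h^{\mathcal{L}}_{\mu}(g,K)$ is exactly what replaces the tail control your counting argument would need. To salvage your route you would have to prove the counting estimate for the unbounded partition element, which is not done here and is not obviously true.
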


The proof of Theorem \ref{thm1} is based in three key facts. First, the geometric potential determines the sum of the positive Lyapunov exponents. Second, the Lebesgue measure on $T^1X$ verifies a Gibbs property for the geometric potential (see Proposition \ref{gibbsprop}). And third, the measure-theoretical entropy is bounded from above by the exponential decay of the volume of dynamical balls on compact sets (see Corollary \ref{cor0}). 

As said before, we also treat the equality case on Ruelle's inequality. No additional assumptions to those of Theorem \ref{thm1} are needed.

\begin{theorem}\label{thm2} Let $X$ be a complete Riemannian manifold with dimension at least 2 and pinched negative sectional curvature. Assume that the derivatives of the sectional curvature are uniformly bounded. Let $\mu$ be a $(g^{t})$-invariant probability measure on $T^{1}X$. Then $\mu$ has absolutely continuous conditional measures on unstable manifolds if and only if
$$h_{\mu}(g)=\int \sum_{\lambda_{j}(v)>0} \lambda_{j}(v)\dim(E_{j}(v)) d\mu(v).$$
\end{theorem}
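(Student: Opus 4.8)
The plan is to prove Theorem~\ref{thm2} by reducing it to the corresponding result of Ledrappier--Young \cite{MR819556} for diffeomorphisms, applied to the time-one map $g^1$ of the geodesic flow. The geodesic flow on $T^1X$ is generated by a smooth vector field, and under the pinched curvature hypothesis together with the uniform bound on the derivatives of the curvature, the time-one map $g^1$ is a $C^{1+\alpha}$ (indeed smooth) diffeomorphism whose derivative cocycle is controlled uniformly on the support of any invariant measure. The abstract Ledrappier--Young theorem characterizes equality in Ruelle's inequality for $g^1$ as absolute continuity of the conditional measures of $\mu$ along the unstable manifolds of $g^1$. The first step, therefore, is to record the dictionary between the flow and its time-one map: the measure-theoretic entropies satisfy $h_\mu(g^1)=h_\mu((g^t))$ by Abramov's formula (the flow is at unit speed), the Lyapunov exponents of $g^1$ are exactly the Lyapunov exponents of the flow, and the flow direction contributes a zero exponent that does not enter the sum over positive exponents. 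Consequently the equality in Theorem~\ref{thm2} is literally the equality case of Ruelle's inequality for the single diffeomorphism $g^1$.

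The heart of the argument is then to verify that the unstable manifolds of $g^1$ coincide (up to the flow direction) with the strong-unstable manifolds of the geodesic flow, and that absolute continuity of conditionals along the flow's unstable foliation $W^u$ is equivalent to absolute continuity along the strong-unstable foliation $W^{uu}$ of $g^1$. Here I would use the standard product structure $W^u = W^{uu}\times(\text{flow direction})$ and the fact that the flow-direction conditional of any $g^t$-invariant measure is Lebesgue (arclength), by invariance. Thus absolute continuity of the $\mu$-conditionals on the flow's unstable manifolds is equivalent to absolute continuity of the $g^1$-conditionals on the strong-unstable manifolds, and the equivalence in Ledrappier--Young transfers verbatim. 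The key geometric input is that in pinched negative curvature the stable and unstable distributions are well-defined, Hölder continuous, and tangent to genuine $C^1$ (stable/unstable) manifolds obtained from the stable/unstable horospheres; this is exactly the Pesin-theoretic local structure that \cite{MR819556} requires, and it is available here uniformly because of the curvature bounds.

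The main obstacle is that \cite{MR819556} is stated for diffeomorphisms of \emph{compact} manifolds, whereas $T^1X$ is noncompact. Overcoming this is precisely the role of the new noncompact entropy theory developed earlier in the paper and of the hypotheses in Theorem~\ref{thm1}: one needs the Oseledec multiplicative ergodic theorem, the existence and absolute continuity of the Pesin unstable lamination, and the Ledrappier--Young entropy-dimension machinery to remain valid for $g^1$ on the noncompact space $T^1X$. I expect the work to consist of checking that each ingredient of the Ledrappier--Young proof survives under the uniform control of $Dg^1$ and its Hölder modulus furnished by the bounded curvature and bounded curvature-derivative assumptions. In particular the bound on the derivatives of the sectional curvature guarantees uniform $C^{1+\alpha}$ estimates on $g^1$ along orbits, which is what replaces compactness in the distortion and absolute-continuity arguments. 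Once the local Pesin theory and the entropy formula of \cite{MR819556} are established in this uniformly-controlled noncompact setting, the equivalence stated in Theorem~\ref{thm2} follows; the Ruelle inequality of Theorem~\ref{thm1} provides the one direction ($\leq$) automatically, so only the characterization of equality remains, and that is supplied by the transferred Ledrappier--Young criterion.
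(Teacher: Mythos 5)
Your overall strategy --- pass to the time-one map and transfer the Ledrappier--Young characterization \cite{MR819556} of the equality case --- is indeed the route the paper takes, but as written your proposal defers essentially all of the actual content to the sentence ``I expect the work to consist of checking that each ingredient of the Ledrappier--Young proof survives,'' and that is precisely where the proof lives. Concretely, what the paper supplies and your sketch does not: (i) the existence, in the noncompact setting, of a decreasing $\mu$-measurable partition $\xi$ subordinate to the $W^{su}$-foliation which generates under $g^{-1}$ and whose atoms exhaust the strong unstable leaves (Proposition \ref{keyLedStrLedYoung}, imported from \cite{MR2097356}); (ii) the fact that any such partition already computes the full entropy, $h_\mu(g)=h_\mu(g,\xi)$ (Proposition \ref{L/OP}) --- without this, knowing $h_\mu(g,\xi)=\chi^+$ tells you nothing about $h_\mu(g)$; and (iii) the explicit computation, in the direction ``absolute continuity $\Rightarrow$ equality,'' that $H_\mu(g^{-1}\xi\mid\xi)=\int\log J^{su}(v,1)\,d\mu(v)$ via the Radon--Nikodym density $\kappa=d\mu_{\xi(v)}/d\mathrm{vol}_v$, the change of variables along unstable leaves, and the H\"older convergence of the infinite product $\Delta(w,w')$ (properties (5)--(7) of Proposition \ref{keyLedStrLedYoung}), which in turn rest on the bounded derivatives of the curvature. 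The converse direction is then the criterion of \cite{MR743818}, usable only once (ii) is in place. None of these steps is a routine verification that ``compactness is not needed''; each is a specific substitute for a compactness argument.

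Two further inaccuracies. First, you attribute the resolution of the noncompactness obstacle to ``the new noncompact entropy theory developed earlier in the paper''; that theory (Sections 2--3, local Riemannian entropy, Katok $\delta$-entropies) is what proves Theorem \ref{thm1}, and it plays no role in the proof of Theorem \ref{thm2}, where the noncompactness is instead handled by the partition construction of \cite{MR2097356} and by the integrability of $\log J^{su}$ coming from pinched curvature. Second, two smaller points: the time-one map of an ergodic flow need not be ergodic, so one must first pass to an ergodic time $\tau$ as in \cite{MR536988} (the paper does this at the start of Section \ref{ss:RuellePesin}); and your discussion of the product $W^u=W^{su}\times(\text{flow direction})$ is a detour --- the notion of absolute continuity in the statement is already formulated along the strong unstable foliation, which coincides with the Pesin unstable foliation of $g^1$ because the flow direction carries a zero exponent, so no transfer between $W^u$ and $W^{su}$ conditionals is needed.
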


Finally, using tools of thermodynamical formalism (see last section), we can improve Theorem \ref{thm1} and Theorem \ref{thm2} as follows

\begin{corollary}\label{cor:summary_Ruelle} Let $X$ be a complete Riemannian manifold with dimension at least 2 and pinched negative sectional curvature. Assume that the derivatives of the sectional curvature are uniformly bounded. 
\begin{enumerate}
\item[(i)] If $X$ has finite volume and $\nu$ denotes the normalized Lebesgue measure on $T^1X$, then
$$h_{\nu}(g)=\int \sum_{\lambda_{j}(v)>0} \lambda_{j}(v)\dim(E_{j}(v)) d\nu(v).$$
Moreover, any other $(g^{t})$-invariant probability measure $\mu$ on $T^{1}X$ verifies
$$h_{\mu}(g)<\int \sum_{\lambda_{j}(v)>0} \lambda_{j}(v)\dim(E_{j}(v)) d\mu(v).$$
\item[(ii)] If $X$ has infinite volume, then there is no $(g^{t})$-invariant probability measure on $T^1X$ verifying Pesin's entropy formula.
\end{enumerate}
\end{corollary}

\subsection{Structure of the paper} In Section 2 we are devoted to study deeply the concept of entropy for arbitrary topological dynamical systems. We are parti\-cularly interested in the interaction between classical notions of entropy such as measure-theoretic entropy, local entropy and $\delta$-entropies. For dynamical systems defined on Riemannian manifolds we introduce a geometric notion of entropy, called Riemannian local entropy, that roughly speaking measures the exponential decay of the volume of dynamical balls. At the end of the section we prove that the measure-theoretic entropy is less than the Riemannian local entropy. In Section 3 we introduce Lyapunov exponents and some results on the dynamic of the geodesic flow. We also prove theorems \ref{thm1} and \ref{thm2}.\noindent

\begin{ack}
I am very grateful to my Ph.D. adviser Barbara Schapira for many helpful discussions from the very beginning of this work. I would also like to thank Samuel Tapie for his inspiring remarks on some results of this paper, Godofredo Iommi for some nice suggestions that improved the presentation of this work, and the referee of an early version of this work for all the remarks realized.
\end{ack}

\section{Entropy}\label{s:preliminaries}

Let $(X,\mathcal{B},\mu)$ be a probability space and consider a measurable transformation $T:X\to X$ preserving the measure $\mu$. Recall that $\mu$ is said to be \emph{ergodic} if every $T$-invariant measurable set $A\subset X$ verifies $\mu(A)\in\{0,1\}$.

\subsection{Measure-theoretic entropy} Let $\mathcal{P}$ be a countable measurable partition of $X$. The entropy of $\mathcal{P}$ with respect to $\mu$, denoted by $H_{\mu}(\mathcal{P})$, is defined as
$$H_{\mu}(\mathcal{P})=-\sum_{P\in\mathcal{P}}\mu(P)\log\mu(P).$$
For all $n\geq 0$, define the partition $\mathcal{P}^{n}$ as the measurable partition consisting of all possible intersections of elements of $T^{-i}\mathcal{P}$, for all $i=0,...,n-1$. The entropy of $T$ with respect to the partition $\mathcal{P}$ is then defined as the limit
$$h_{\mu}(T,\mathcal{P})=\lim_{n\to\infty} \frac{1}{n}H_{\mu}(\mathcal{P}^{n}).$$
The \textit{measure-theoretic entropy of} $T$, \textit{with respect to} $\mu$, is the supremum of the entropies $h_{\mu}(T,\mathcal{P})$ over all measurable finite partitions $\mathcal{P}$ of $X$, i.e.
$$h_{\mu}(T)=\sup_{\mathcal{P} \text{ finite}} h_{\mu}(T,\mathcal{P}).$$

\subsection{Katok $\delta$-entropies} Suppose from now on that $(X,d)$ is a complete and separable metric space, $T:X\to X$ is a continuous transformation and $\mu$ is a Borel $T$-invariant probability measure on $X$. For every $n\geq 1$ the dynamical distance $d_{n}$ is defined by
$$d_{n}(x,y)=\max_{0\leq i\leq n-1} d(T^{i}x,T^{i}y), \quad \mbox{for all} \quad x,y\in X.$$
The $(n,r)$-dynamical ball centered at $x$, denoted by $B_{n}(x,r)$, is the ball centered at $x$ of radius $r$ for the dynamical distance $d_{n}$. Note that since $T$ is continuous, the $(n,r)$-dynamical balls are open subsets of $X$. Let $B$ be a subset of $X$. A $(n,r)$-\emph{covering} of $B$ is a covering of $B$ by $(n,r)$-dynamical balls. A $(n,r)$-\emph{separated} set in $B$ is a subset $E$ of $B$ such that for every $x,y\in E$, if $x\neq y$, then $d_{n}(x,y)\geq r$.

\begin{definition} Let $B\subset X$ be any set. 
\begin{enumerate}
  \item[(i)]  The minimal cardinality of a $(n,r)$-covering of $B$ is denoted by $N(n,r,B)$, and
  \item[(ii)] the maximal cardinality of a $(n,r)$-separated set in $B$ is denoted by $S(n,r,B)$.
\end{enumerate}
\end{definition}

\begin{remark} Note that $N(n,r,B)$ may be infinite. However, this is not the case when $B$ is a compact set.
\end{remark}

Lemma \ref{CardCovCardSep} below is classical (see for instance \cite[page 169]{Walters}). It states that if $K\subset X$ is compact, then both $N(n,r,K)$ and $S(n,r,K)$ are comparable.

\begin{lemma}\label{CardCovCardSep} Let $n\geq 1$ and $r>0$. Then for all compact subsets $K\subset X$, we have
$$N(n,r,K)\leq S(n,r,K) \leq N(n,r/2,K).$$
\end{lemma}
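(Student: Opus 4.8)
The plan is to establish the two inequalities separately, with a maximal $(n,r)$-separated set serving as the bridge between covering and separation. First I would record the elementary but essential fact that $d_n$ is a genuine metric: since $d_n(x,y)=\max_{0\leq i\leq n-1} d(T^ix,T^iy)$ is a maximum of the metrics $(x,y)\mapsto d(T^ix,T^iy)$, it inherits symmetry and the triangle inequality. I would also note that compactness of $K$ forces $S(n,r,K)$ to be finite, so a maximal $(n,r)$-separated set $E\subset K$ exists and has cardinality exactly $S(n,r,K)$.

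For the left inequality $N(n,r,K)\leq S(n,r,K)$, I would fix such a maximal $(n,r)$-separated set $E=\{x_1,\dots,x_s\}$ with $s=S(n,r,K)$ and argue that the dynamical balls $B_n(x_j,r)$ cover $K$. Indeed, if some $y\in K$ lay outside every $B_n(x_j,r)$, then $d_n(y,x_j)\geq r$ for all $j$, so $E\cup\{y\}$ would still be $(n,r)$-separated, contradicting the maximality of $E$. Hence $\{B_n(x_j,r)\}_{j=1}^s$ is an $(n,r)$-covering of $K$ of cardinality $s$, and therefore the minimal cardinality satisfies $N(n,r,K)\leq s=S(n,r,K)$.

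For the right inequality $S(n,r,K)\leq N(n,r/2,K)$, I would take a minimal $(n,r/2)$-covering of $K$, say by balls $B_n(z_1,r/2),\dots,B_n(z_m,r/2)$ with $m=N(n,r/2,K)$, together with the same maximal separated set $E$. The key observation is that each covering ball contains at most one point of $E$: if two distinct points $y,y'\in E$ both lay in $B_n(z_i,r/2)$, then by the triangle inequality for $d_n$ we would get $d_n(y,y')\leq d_n(y,z_i)+d_n(z_i,y')<r/2+r/2=r$, contradicting that $E$ is $(n,r)$-separated. Since $E\subset K$ is covered by the $m$ balls and each ball absorbs at most one point of $E$, we conclude $S(n,r,K)=\lvert E\rvert\leq m=N(n,r/2,K)$.

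I do not anticipate a serious obstacle here; the argument is purely combinatorial once the metric structure of $d_n$ is in hand. The only points requiring a moment of care are the existence of a maximal separated set (guaranteed by the finiteness that compactness provides) and the scale bookkeeping in the second inequality, where the halving of $r$ is exactly what makes the pigeonhole step work via the triangle inequality. Everything else is a direct consequence of maximality and of $d_n$ being a metric.
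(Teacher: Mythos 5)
Your proof is correct. The paper does not prove this lemma at all --- it simply cites it as classical (Walters, p.~169) --- and your argument is exactly that standard one: a maximal $(n,r)$-separated set must be $(n,r)$-spanning by maximality, which gives $N(n,r,K)\leq S(n,r,K)$, and the triangle inequality for the metric $d_n$ at scale $r/2$ shows each ball of a minimal $(n,r/2)$-covering contains at most one point of a separated set, which gives the other bound.
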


Recall that Bowen's definition of topological entropy of a continuous transformation on a compact metric space is the following:
\begin{eqnarray*}
h_{top}(T)&=&\lim_{r\to 0}\limsup_{n\to\infty} \frac{1}{n}\log N(n,r,X)\\
&=& \lim_{r\to 0}\limsup_{n\to\infty} \frac{1}{n}\log S(n,r,X).
\end{eqnarray*}

From the measure point of view, Katok proposed the following definition of entropy in \cite{Katok}. For every $0<\delta<1$, denote by $N_\mu(n,r,\delta)$ the minimal cardinality of a $(n,r)$-covering of a set of $\mu$-measure greater than $1-\delta$. Observe that this number is finite since every compact subset of measure greater than $1-\delta$ admits a finite $(n,r)$-covering.

\begin{definition} Let $0<\delta<1$. The \emph{lower} and \emph{upper} \emph{$\delta$-entropies} relative to $\mu$, denoted respectively by $\underline{h}^{\delta}_{\mu}(T)$ and $\overline{h}^{\delta}_{\mu}(T)$, are defined as
$$\underline{h}^{\delta}_{\mu}(T)=\lim_{r\to 0}\liminf_{n\to\infty}\frac{1}{n}\log N_\mu(n,r,\delta)$$
and
$$\overline{h}^{\delta}_{\mu}(T)=\lim_{r\to 0}\limsup_{n\to\infty}\frac{1}{n}\log N_\mu(n,r,\delta).$$
\end{definition}

\begin{proposition}\label{ComparisonKatokEnt} Let $0<\delta_{2}\leq \delta_{1}<1$, then
$$\underline{h}^{\delta_{1}}_{\mu}(T)\leq \underline{h}^{\delta_{2}}_{\mu}(T) \quad \mbox{and} \quad \overline{h}^{\delta_{1}}_{\mu}(T) \leq \overline{h}^{\delta_{2}}_{\mu}(T).$$
\end{proposition}
\begin{proof} We define $\mathcal{B}_{i}$, for $i=1,2$, by $\mathcal{B}_{i}=\{B: \mu(B)>1-\delta_{i}\}$. Since $\mathcal{B}_{2}\subset \mathcal{B}_{1}$, we obtain
\begin{eqnarray*}
\underline{h}^{\delta_{1}}_{\mu}(T)&=& \lim_{r\to 0}\liminf_{n\to\infty}\frac{1}{n}\log \min\{N(n,r,B):B\in \mathcal{B}_{1}\}\\
&\leq& \lim_{r\to 0}\liminf_{n\to\infty}\frac{1}{n}\log \min\{N(n,r,B):B\in \mathcal{B}_{2}\}\\
&=&\underline{h}^{\delta_{2}}_{\mu}(T)
\end{eqnarray*}
The other inequality can be proved similarly.
\end{proof}

When $X$ is a compact metric space Katok proved that the lower and upper $\delta$-entropies are equal and coincide with the measure-theoretic entropy \cite[Theorem 1.1]{Katok}. In his proof the assumption of compactness for $X$ is only used to show that $\overline{h}^{\delta}_{\mu}(T)\leq h_{\mu}(T)$. The other inequality is only based on the fact that $h_{\mu}(T)$ can be approximate by entropies $h_{\mu}(T,\mathcal{P})$, with respect to a partition $\mathcal{P}$ satisfying $\mu(\partial \mathcal{P})=0$, where $\partial \mathcal{P}$ is the union of the boundaries of the elements of $\mathcal{P}$\footnote{The validity of this inequality in the noncompact case also has been remarked in \cite{GK} to compute the topological entropy of the geodesic flow in the modular surface.}. So one can conclude the following:

\begin{theorem}[Katok]\label{KatokEntMTEntropy} Let $X$ be a complete and separable metric space and let $T:X\to X$ be a continuous transformation. If $\mu$ is an ergodic $T$-invariant probability measure, then for all $0<\delta<1$, we have
$$h_{\mu}(T)\leq\underline{h}^{\delta}_{\mu}(T).$$
\end{theorem}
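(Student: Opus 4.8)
The plan is to prove the equivalent inequality $\underline{h}^{\delta}_{\mu}(T)\geq h_{\mu}(T)$ by establishing $\underline{h}^{\delta}_{\mu}(T)\geq h_{\mu}(T,\mathcal{P})$ for every \emph{finite} partition $\mathcal{P}$ with $\mu(\partial\mathcal{P})=0$, and then invoking the fact (singled out in the discussion preceding the statement) that such partitions realize the supremum defining $h_{\mu}(T)$. So I fix one such $\mathcal{P}$, write $h=h_{\mu}(T,\mathcal{P})$, and fix $\epsilon>0$. Since $\mu$ is ergodic, the Shannon--McMillan--Breiman theorem gives $-\frac1n\log\mu(\mathcal{P}^{n}(x))\to h$ for $\mu$-a.e.\ $x$, where $\mathcal{P}^n(x)$ denotes the atom of $\mathcal{P}^n$ containing $x$. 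By Egorov's theorem I can fix, for each small $r>0$, a set $A=A(r,\epsilon)$ of measure as close to $1$ as I wish and an integer $N_1$ with $\mu(\mathcal{P}^n(x))\leq e^{-n(h-\epsilon)}$ for all $x\in A$ and $n\geq N_1$. Crucially $A$ depends only on $\mathcal{P},r,\epsilon$ and not on the set to be covered.

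The core of the argument is a two-sided count of the atoms of $\mathcal{P}^{n}$ meeting a fixed set. Let $Z$ be any set with $\mu(Z)>1-\delta$ and take a minimal $(n,r)$-covering $\{B_n(x_j,r)\}_{j=1}^{N}$. Choosing $\mu(A)$ close enough to $1$ forces $\mu(Z\cap A)\geq (1-\delta)/2=:c_\delta>0$; discarding the balls disjoint from $Z\cap A$ and re-centering the survivors at points $z_j\in B_n(x_j,r)\cap Z\cap A$, I obtain at most $N$ balls $B_n(z_j,2r)$ with centers in $A$ which still cover $Z\cap A$. Every atom $Q\in\mathcal{P}^n$ meeting $Z\cap A$ contains a point of $A$, hence satisfies $\mu(Q)\leq e^{-n(h-\epsilon)}$; since these atoms cover $Z\cap A$ there must be at least $c_\delta\, e^{n(h-\epsilon)}$ of them. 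On the other side I bound the number of atoms meeting a single ball $B_n(z,2r)$ by $\prod_{k=0}^{n-1}m(T^kz,2r)$, where $m(w,2r)$ is the number of elements of $\mathcal{P}$ meeting $B(w,2r)$. Comparing the two counts for the minimal covering of any admissible $Z$ yields $N_\mu(n,r,\delta)\geq c_\delta\, e^{n(h-\epsilon)}/M_n$, with $M_n$ the maximal such product over admissible centers.

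The main obstacle is showing that $M_n$ grows subexponentially, and this is exactly where $\mu(\partial\mathcal{P})=0$ and ergodicity enter. Let $\partial_{2r}\mathcal{P}$ be the set of $w$ for which $B(w,2r)$ meets at least two atoms of $\mathcal{P}$; then $m(w,2r)=1$ off $\partial_{2r}\mathcal{P}$ and $m(w,2r)\leq |\mathcal{P}|$ always, so $\prod_k m(T^kz,2r)\leq |\mathcal{P}|^{\#\{k<n:\,T^kz\in\partial_{2r}\mathcal{P}\}}$. Shrinking $A$ further using the Birkhoff ergodic theorem applied to $\mathbf{1}_{\partial_{2r}\mathcal{P}}$ together with Egorov, I may assume the visit frequency of each center $z\in A$ to $\partial_{2r}\mathcal{P}$ is at most $2\mu(\partial_{2r}\mathcal{P})$ for $n$ large, giving $\frac1n\log M_n\leq 2\mu(\partial_{2r}\mathcal{P})\log|\mathcal{P}|$. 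Passing to $\liminf_{n\to\infty}$ and then letting $r\to 0$, the hypothesis $\mu(\partial\mathcal{P})=0$ forces $\mu(\partial_{2r}\mathcal{P})\to 0$, so the correction term vanishes and $\underline{h}^{\delta}_{\mu}(T)\geq h-\epsilon$. Letting $\epsilon\to0$ and taking the supremum over admissible $\mathcal{P}$ finishes the proof; note that the constant $c_\delta>0$ works for every $\delta\in(0,1)$, so no compactness of $X$ is used beyond the finiteness of covers of sets of positive measure.
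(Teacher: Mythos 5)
Your proof is correct and is essentially the argument the paper relies on: it reconstructs the non-compact direction of Katok's original proof, which the paper cites rather than writes out (Shannon--McMillan--Breiman forces at least $c_\delta e^{n(h-\epsilon)}$ atoms of $\mathcal{P}^n$ to meet any set of measure greater than $1-\delta$, while $\mu(\partial\mathcal{P})=0$ together with Birkhoff makes each $(n,2r)$-ball meet only subexponentially many atoms). The only cosmetic adjustment is to replace the visit-frequency bound $2\mu(\partial_{2r}\mathcal{P})$ by $\mu(\partial_{2r}\mathcal{P})+\epsilon'$ with $\epsilon'\to 0$, so as to cover the case $\mu(\partial_{2r}\mathcal{P})=0$.
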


\subsection{Local entropies of Brin-Katok} The aim of this subsection is to understand some relations between previous notions of entropy and local entropy. The notion of local entropy was introduced by Brin and Katok in \cite{BK}.

\begin{definition} The \emph{lower} and \emph{upper} \emph{local entropies} of $T$ relative to $\mu$, denoted respectively by $\underline{h}^{loc}_{\mu}(T)$ and $\overline{h}^{loc}_{\mu}(T)$, are defined as
$$\underline{h}^{loc}_{\mu}(T)=\essinf_{x\in X}\lim_{r\to 0}\liminf_{n\to\infty}-\frac{1}{n}\log \mu(B_{n}(x,r))$$
and
$$\overline{h}^{loc}_{\mu}(T)=\esssup_{x\in X}\lim_{r\to 0}\limsup_{n\to\infty}-\frac{1}{n}\log \mu(B_{n}(x,r)).$$
\end{definition}

\begin{lemma}\label{LemmaEntLocal1} Let $X$ be a complete and separable metric space and let $T:X\to X$ be a continuous transformation. If $\mu$ is an ergodic $T$-invariant probability measure on $X$, then
$$\underline{h}^{loc}_{\mu}(T)=\int \lim_{r\to 0}\liminf_{n\to\infty}-\frac{1}{n}\log \mu(B_{n}(x,r)) d\mu(x)$$
and
$$\overline{h}^{loc}_{\mu}(T)=\int \lim_{r\to 0}\limsup_{n\to\infty}-\frac{1}{n}\log \mu(B_{n}(x,r)) d\mu(x).$$
\end{lemma}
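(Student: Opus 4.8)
The plan is to prove that the two integrands are $\mu$-almost everywhere equal to a constant; once this is established, the result is immediate, because for a probability measure the essential infimum (resp.\ essential supremum) of a function that is $\mu$-a.e.\ equal to a constant $c$ equals its integral, both being $c$ (the identity persisting even if $c=+\infty$). Write
$$\underline{\phi}(x)=\lim_{r\to 0}\liminf_{n\to\infty}-\frac{1}{n}\log\mu(B_n(x,r)),\qquad \overline{\phi}(x)=\lim_{r\to 0}\limsup_{n\to\infty}-\frac{1}{n}\log\mu(B_n(x,r)).$$
These are well defined with values in $[0,+\infty]$: as $r\downarrow 0$ the ball $B_n(x,r)$ shrinks, so $-\frac1n\log\mu(B_n(x,r))$ is nonnegative and nondecreasing in the limit $r\to 0$, and hence the outer limits exist by monotonicity. (Measurability of $x\mapsto\mu(B_n(x,r))$ follows from the joint continuity of $(x,y)\mapsto d_n(x,y)$.)

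The geometric input is the elementary identity
$$B_n(x,r)=B(x,r)\cap T^{-1}B_{n-1}(Tx,r),$$
obtained by separating the condition at time $0$ from the conditions at times $1,\dots,n-1$ in the definition of the dynamical ball. In particular $B_n(x,r)\subseteq T^{-1}B_{n-1}(Tx,r)$, so the $T$-invariance of $\mu$ gives
$$\mu(B_n(x,r))\leq\mu(B_{n-1}(Tx,r)).$$
Taking $-\frac1n\log$, writing the right-hand side as $\frac{n-1}{n}\bigl(-\frac{1}{n-1}\log\mu(B_{n-1}(Tx,r))\bigr)$, and using that $\tfrac{n-1}{n}\to 1$ multiplies a nonnegative sequence without changing its $\liminf$ or $\limsup$, I would pass to $\liminf_n$ (resp.\ $\limsup_n$) and then to the monotone limit $r\to 0$ to obtain the one-sided inequalities
$$\underline{\phi}(x)\geq\underline{\phi}(Tx),\qquad \overline{\phi}(x)\geq\overline{\phi}(Tx),\qquad\text{for every }x\in X.$$

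The main point is then to upgrade this one-sided sub-invariance to genuine constancy, which is exactly where ergodicity enters. Let $g$ denote either $\underline{\phi}$ or $\overline{\phi}$: it is a nonnegative measurable function satisfying $g\circ T\leq g$ everywhere. Consequently, for each $c$ the sublevel set $\{g<c\}$ is forward invariant, i.e.\ $\{g<c\}\subseteq T^{-1}\{g<c\}$; since $\mu$ is $T$-invariant these two sets have equal measure and therefore coincide modulo a $\mu$-null set (replacing $\{g<c\}$ by the strictly invariant increasing union $\bigcup_{k\geq 0}T^{-k}\{g<c\}$ if one wants a genuinely invariant set). Ergodicity then forces $\mu(\{g<c\})\in\{0,1\}$ for every $c$, and a function all of whose sublevel sets have measure $0$ or $1$ is $\mu$-a.e.\ equal to the constant $c^{\ast}=\inf\{c:\mu(\{g<c\})=1\}$. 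Applying this to $g=\underline{\phi}$ and to $g=\overline{\phi}$ yields the desired $\mu$-a.e.\ constancy, and combining with the reduction in the first paragraph proves both stated equalities.
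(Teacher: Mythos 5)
Your proof is correct. The geometric input is identical to the paper's: the inclusion $B_n(x,r)\subset T^{-1}B_{n-1}(Tx,r)$ (the paper writes it as $T(B_{n+1}(x,r))\subset B_n(Tx,r)$) yielding the sub-invariance $\phi\circ T\le\phi$ of the pointwise lower and upper local entropies. Where you genuinely diverge is in how ergodicity converts sub-invariance into the stated identities. The paper passes to the honestly $T$-invariant function $\eta(x)=\inf_{k\ge 0}\phi(T^kx)$, which is a.e.\ constant, and then identifies that constant with $\int\phi\,d\mu$ by noting that the non-increasing sequence $\phi(T^kx)$ converges to $\eta(x)$, hence so do its Ces\`aro averages, and invoking Birkhoff's ergodic theorem. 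You instead prove directly that $\phi$ itself is a.e.\ constant: sub-invariance makes each sublevel set $\{\phi<c\}$ forward invariant, hence invariant modulo a null set, hence of measure $0$ or $1$; after that the essential infimum, essential supremum and integral all trivially coincide. Your route is more elementary (no ergodic theorem, and no integrability caveat for a possibly infinite-valued $\phi$), and it treats the $\liminf$ and $\limsup$ statements uniformly, whereas the paper's closing remark that the second identity follows ``by considering the supremum instead of the infimum'' glosses over the fact that one still needs a small recurrence argument to match $\esssup\overline{\phi}$ with the invariant constant $\inf_k\overline{\phi}\circ T^k$. The Birkhoff route buys a description of the constant as a time average, but for this lemma your argument is, if anything, the cleaner of the two.
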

\begin{proof} For the sake of simplicity, for each $x\in X$ we denote by $\underline{h}^{loc}_{\mu}(T,x)$ the limit
$$\lim_{r\to 0}\liminf_{n\to\infty}-\frac{1}{n}\log \mu(B_{n}(x,r)).$$
Since $T(B_{n+1}(x,r))\subset B_{n}(Tx,r)$ for every $n\geq 1$, the $T$-invariance of $\mu$ implies that
\begin{equation}\label{eq:LemmaEntLocal1}
\underline{h}^{loc}_{\mu}(T,Tx)\leq \underline{h}^{loc}_{\mu}(T,x),
\end{equation}
for every $x\in X$. Define $\underline{\eta}(x)$ as $\underline{\eta}(x)=\inf_{k\geq 0} \underline{h}^{loc}_{\mu}(T,T^k x)$. By definition $\underline{\eta}$ is a $T$-invariant function, so it is $\mu$-a.e. constant equal to some constant $\underline{\eta}(\mu)$. Note that $\underline{\eta}(\mu)$ is also equal to the essential infimum of $\underline{h}^{loc}_{\mu}(T,x)$. On the other hand, inequality \ref{eq:LemmaEntLocal1} implies that
$$\underline{\eta}(x)=\lim_{n\to\infty}\frac{1}{n}\sum_{k=0}^{n-1} \underline{h}^{loc}_{\mu}(T,T^k x),$$
for every $x\in X$. Using Birkhoff's Ergodic Theorem, we conclude that
$$ \underline{\eta}(\mu) = \int \underline{h}^{loc}_{\mu}(T,x) d\mu(x),$$
which is exactly the first desired equality. The second equality follows from the same strategy by considering the supremum instead of the infimum in every involved term.
\end{proof}

 As in the case of $\delta$-entropies, the lower and upper local entropies coincide and are equal to the measure-theoretic entropy. This was proved in \cite{BK} for continuous transformations defined on compact metric spaces. However, again only one inequality in the proof requires the compactness assumption on $X$. Indeed, the other inequality follows from from the fact that for ergodic probability measures and finite partitions having boundaries with null measure, the inequality $h_\mu(T,\mathcal{P})\leq \underline{h}^{loc}_{\mu}(T,x)$ is verified for almost every $x\in X$. So, we obtain

\begin{theorem}[Brin-Katok]\label{InfLocalEntropyMTEntropy} Let $X$ be a complete and separable metric space and let $T:X\to X$ be a continuous transformation. If $\mu$ is an ergodic $T$-invariant probability measure on $X$, then
\begin{eqnarray}\label{Entropyandlocalentropy1}
h_{\mu}(T)\leq \underline{h}^{loc}_{\mu}(T).
\end{eqnarray}
\end{theorem}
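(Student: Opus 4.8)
The plan is to reproduce the original Brin--Katok argument \cite{MR730261}, verifying that completeness of $X$ suffices in place of compactness for this particular inequality. Since $h_\mu(T)=\sup_{\mathcal P}h_\mu(T,\mathcal P)$ and, as already used for Theorem \ref{KatokEntMTEntropy}, this supremum may be taken over finite partitions $\mathcal P$ with $\mu(\partial\mathcal P)=0$, it is enough to prove that $h_\mu(T,\mathcal P)\leq \underline h^{loc}_\mu(T)$ for every such $\mathcal P=\{P_1,\dots,P_k\}$. Fix one, fix $\eta>0$, and note that since $\mu(\partial\mathcal P)=0$ there is $r>0$ with $\mu(U_r)<\eta$, where $U_r$ denotes the open $r$-neighbourhood of $\partial\mathcal P$. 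The only tools I would use are the Birkhoff and Shannon--McMillan--Breiman theorems, both valid on the complete (hence standard) space $X$ with no compactness assumption.

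First I would record the geometric link between dynamical balls and the partition. If $0\leq i\leq n-1$ is a time with $T^i x\notin U_r$, then the metric ball $B(T^i x,r)$ is contained in the single atom $\mathcal P(T^i x)$; consequently every $y\in B_n(x,r)$ lies in the same atom as $x$ at that time. Writing $G_n(x)=\{0\leq i\leq n-1: T^i x\notin U_r\}$ for the ``good'' times, this gives
\begin{equation*}
B_n(x,r)\subseteq \bigcap_{i\in G_n(x)}T^{-i}\mathcal P(T^i x),
\end{equation*}
so $\mu(B_n(x,r))$ is bounded above by the measure of the cylinder specified by the atoms of $x$ at the good times only. By Birkhoff's theorem applied to $\mathbbm{1}_{U_r}$ and ergodicity, for $\mu$-a.e.\ $x$ the proportion of bad times tends to $\mu(U_r)<\eta$, so $|G_n(x)|\geq (1-\eta)n$ for all large $n$.

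Next I would estimate the cylinder. It is a union of at most $k^{\eta n}$ atoms of $\mathcal P^n$ --- those whose itinerary agrees with that of $x$ at every good time --- obtained from $\mathcal P^n(x)$ by letting the symbols at the at most $\eta n$ bad times vary. Grouping these atoms according to the at most $\binom{n}{\eta n}$ admissible locations of the bad times and estimating the atoms of $\mathcal P^n$ by the Shannon--McMillan--Breiman theorem, one bounds the measure of the cylinder, for a.e.\ $x$ and all large $n$, by $e^{-n(h_\mu(T,\mathcal P)-\varepsilon(\eta)-\epsilon)}$, where $\epsilon$ is the Shannon--McMillan--Breiman accuracy and $\varepsilon(\eta)=\eta\log k+H(\eta)$ absorbs the two counting factors, with $H(\eta)=-\eta\log\eta-(1-\eta)\log(1-\eta)$. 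Combined with the containment this yields
\begin{equation*}
-\frac1n\log\mu(B_n(x,r))\geq h_\mu(T,\mathcal P)-\varepsilon(\eta)-\epsilon
\end{equation*}
for a.e.\ $x$ and all large $n$. Taking $\liminf_{n\to\infty}$, then letting $r\to0$ (whence $\eta\to0$ and $\varepsilon(\eta)\to0$) and finally $\epsilon\to0$, I obtain $\underline h^{loc}_\mu(T,x)\geq h_\mu(T,\mathcal P)$ for $\mu$-a.e.\ $x$, hence $\underline h^{loc}_\mu(T)=\essinf_x \underline h^{loc}_\mu(T,x)\geq h_\mu(T,\mathcal P)$, and taking the supremum over $\mathcal P$ finishes the proof.

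The step I expect to be the main obstacle is precisely the cylinder estimate. Shannon--McMillan--Breiman controls the atom $\mathcal P^n(x)$ that actually contains $x$, but the cylinder above is a union of atoms whose itineraries need not be $\mu$-typical, and the good-time set $G_n(x)$ varies with $x$; making the bound uniform is what forces the summation over the possible positions of the bad times and the resulting subexponential factor $\binom{n}{\eta n}k^{\eta n}=e^{n(H(\eta)+\eta\log k)}$. This computation is purely measure-combinatorial and uses no feature of $X$ beyond the Birkhoff and Shannon--McMillan--Breiman theorems, which is why the proof carries over verbatim from the compact to the complete noncompact setting; compactness of $X$ is needed only for the reverse inequality $\overline h^{loc}_\mu(T)\leq h_\mu(T)$, exactly as compactness was used only for $\overline h^\delta_\mu(T)\leq h_\mu(T)$ in the case of the $\delta$-entropies.
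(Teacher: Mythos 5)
Your overall strategy is the right one --- it is exactly the Brin--Katok argument that the paper invokes by reference, and you are correct that compactness enters only into the reverse inequality. The containment $B_n(x,r)\subseteq\bigcap_{i\in G_n(x)}T^{-i}\mathcal P(T^ix)$ and the Birkhoff control $|G_n(x)|\geq(1-\eta)n$ are fine, modulo one small point: in a general complete metric space a ball disjoint from $\partial\mathcal P$ need not lie in a single atom, since balls need not be connected, so one should define $U_r=\{y:B(y,r)\not\subseteq\mathcal P(y)\}$ directly and check that $\mu(U_r)\to\mu(\partial\mathcal P)=0$ as $r\to0$, which restores the containment by definition.

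The genuine gap is the cylinder estimate, exactly where you suspected, and the fix you propose does not close it. The partial cylinder is indeed a union of at most $k^{\eta n}$ atoms of $\mathcal P^n$, but Shannon--McMillan--Breiman gives $\mu(A)\leq e^{-n(h_\mu(T,\mathcal P)-\epsilon)}$ only for atoms $A$ meeting a prescribed typical set $\Gamma$ of measure $1-\gamma$; an atom of the cylinder that differs from $\mathcal P^n(x)$ at a bad time may be entirely atypical and carry measure bounded below independently of $n$, in which case $\mu(\mathrm{cylinder})$ does not decay at all. The factor $\binom{n}{\eta n}$ you introduce counts the possible positions of the bad times as $x$ varies; it does nothing to control atypical atoms inside a single cylinder, so the asserted pointwise bound $-\frac1n\log\mu(B_n(x,r))\geq h_\mu(T,\mathcal P)-\varepsilon(\eta)-\epsilon$ is unproved. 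The actual Brin--Katok proof avoids the pointwise bound entirely: it sets $Z_n=\{x\in\Gamma:\mu(B_n(x,r))\geq e^{-n(h_\mu(T,\mathcal P)-\epsilon)}\}$, covers $Z_n$ by the partial cylinders $C(x)$ with $x\in Z_n$, counts these cylinders (at most $e^{n(h_\mu(T,\mathcal P)-\epsilon)}$ per support, since cylinders with a common support are pairwise disjoint and each has measure at least $e^{-n(h_\mu(T,\mathcal P)-\epsilon)}$, times at most $\sum_{j\leq\eta n}\binom{n}{j}$ choices of support --- this is where the binomial coefficient genuinely enters), bounds $\mu(C(x)\cap\Gamma)\leq k^{\eta n}e^{-n(h_\mu(T,\mathcal P)-\epsilon/4)}$ using Shannon--McMillan--Breiman only on atoms meeting $\Gamma$, and concludes that $\sum_n\mu(Z_n)<\infty$; Borel--Cantelli then yields $\liminf_n-\frac1n\log\mu(B_n(x,r))\geq h_\mu(T,\mathcal P)-\epsilon$ for $\mu$-a.e.\ $x\in\Gamma$, after which one lets $\gamma\to0$, $r\to0$ and $\epsilon\to0$. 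None of this uses compactness, so the theorem stands, but your write-up needs this Borel--Cantelli restructuring to be a proof.
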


Following Ledrappier \cite{Ledrappier2}, we now prove that \ref{Entropyandlocalentropy1} is an equality for Lipschitz transformations defined on (noncompact) Riemannian manifolds.

\begin{theorem}\label{LedrappierEqualityLocEntropy} Let $T:M\to M$ be a Lipschitz transformation of a complete Riemannian manifold and $\mu$ an ergodic $T$-invariant probability measure. Then
\begin{gather*}\label{Entropyandlocalentropy2}
h_{\mu}(T)= \underline{h}^{loc}_{\mu}(T).
\end{gather*}
\end{theorem}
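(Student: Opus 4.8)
The plan is to establish the reverse of the Brin--Katok inequality, namely $\underline{h}^{loc}_{\mu}(T)\le h_{\mu}(T)$; combined with Theorem \ref{InfLocalEntropyMTEntropy} this gives the desired equality. Throughout I would work with the integral representation of Lemma \ref{LemmaEntLocal1}, writing $\underline{h}^{loc}_{\mu}(T)=\int \underline{h}^{loc}_{\mu}(T,x)\,d\mu(x)$ with $\underline{h}^{loc}_{\mu}(T,x)=\lim_{r\to0}\liminf_{n}-\tfrac1n\log\mu(B_n(x,r))$, and I would approximate the dynamical balls \emph{from inside} by atoms of a well-chosen finite partition so as to compare local entropy with a partition entropy $h_\mu(T,\mathcal P)\le h_\mu(T)$.

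Fix $r>0$ and $\varepsilon>0$, and choose a compact set $K$ with $\mu(M\setminus K)<\varepsilon$. Let $\mathcal P=\{P_0,P_1,\dots,P_k\}$ be a finite partition with $P_0=M\setminus K$, with $\mathrm{diam}(P_j)<r$ for $j\ge1$, and with $\mu(\partial\mathcal P)=0$ (possible since $\mu$ is a finite measure, so almost every level set of the distance to a boundary is $\mu$-null). The elementary but decisive observation is that whenever the orbit segment $x,Tx,\dots,T^{n-1}x$ stays inside $K$, every point of $\mathcal P^{n}(x)$ is $d_n$-close to $x$, so that $\mathcal P^{n}(x)\subset B_n(x,r)$ and hence $-\tfrac1n\log\mu(B_n(x,r))\le-\tfrac1n\log\mu(\mathcal P^{n}(x))$. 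By the Shannon--McMillan--Breiman theorem the right-hand side converges $\mu$-a.e. to $h_\mu(T,\mathcal P)$, and since $\mathcal P$ is finite, $h_\mu(T,\mathcal P)\le h_\mu(T)$. Applying Fatou's lemma to the nonnegative functions $-\tfrac1n\log\mu(\mathcal P^{n}(x))$ converts the pointwise comparison into the integral bound $\int \liminf_{n}-\tfrac1n\log\mu(\mathcal P^{n}(x))\,d\mu\le h_\mu(T)$.

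The main obstacle---and the only place where the Lipschitz hypothesis is genuinely used---is that a typical orbit does \emph{not} remain in $K$: by Birkhoff's theorem it visits $P_0$ with asymptotic frequency $\mu(P_0)<\varepsilon$, and at these excursion times the inclusion $\mathcal P^{n}(x)\subset B_n(x,r)$ breaks down because $P_0$ has unbounded diameter. The heart of the proof must therefore be a quantitative comparison lemma controlling the discrepancy between $\mu(B_n(x,r))$ and $\mu(\mathcal P^{n}(x))$ created by these excursions. Here I would exploit the global Lipschitz bound $d(T^{i}x,T^{i}y)\le L^{i}\,d(x,y)$ to control the growth of the separation across each excursion interval, so that the total defect accumulated along the first $n$ steps is governed by the Birkhoff-small quantity $\tfrac1n\sum_{i<n}\mathbbm{1}_{P_0}(T^{i}x)\to\mu(P_0)$, weighted by $\log L$, and hence contributes at most an error of order $\varepsilon\log L$ to $-\tfrac1n\log\mu(B_n(x,r))$. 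Granting such an estimate, the good-orbit bound upgrades to $\int \liminf_{n}-\tfrac1n\log\mu(B_n(x,r))\,d\mu\le h_\mu(T)+C\varepsilon$. Letting $\varepsilon\to0$ (enlarging $K$ to exhaust $M$) removes the error term, and letting $r\to0$---using that $\liminf_{n}-\tfrac1n\log\mu(B_n(x,r))$ increases as $r\downarrow0$, so that monotone convergence together with Lemma \ref{LemmaEntLocal1} applies---yields $\underline{h}^{loc}_{\mu}(T)\le h_\mu(T)$. I expect the quantitative excursion estimate to be the technically delicate step, since one must ensure that the enlargement of dynamical balls forced by Lipschitz expansion over long excursions does not overwhelm the gain coming from the small asymptotic frequency of such excursions.
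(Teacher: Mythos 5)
Your overall architecture matches the paper's: one inequality is Brin--Katok (Theorem \ref{InfLocalEntropyMTEntropy}), and the reverse inequality is obtained by squeezing dynamical balls from inside by atoms of a partition, applying Shannon--McMillan--Breiman, and finishing with monotone convergence via Lemma \ref{LemmaEntLocal1}. The gap is in the step you yourself flag as delicate, and it cannot be repaired in the form you propose. With a \emph{finite} partition whose atoms (other than $P_0=M\setminus K$) have diameter $<r$, the inclusion $\mathcal{P}^{n}(x)\subset B_n(x,r)$ does not degrade by a small error during excursions --- it simply fails: if the orbit spends an excursion of length $m$ in $P_0$, two points of the same atom of $\mathcal{P}^{n}$ can separate up to distance $L^{m}r$ at the intermediate times, and individual excursions can have length of order $\varepsilon n$, so the required radius blows up. Crucially, the quantity you need to bound, $-\frac{1}{n}\log\mu(B_n(x,r))$, is controlled by $-\frac{1}{n}\log\mu(\mathcal{P}^{n}(x))$ \emph{only through the set inclusion} $\mathcal{P}^{n}(x)\subset B_n(x,r)$; once that inclusion fails there is no comparison of measures at all, and the Birkhoff frequency of visits to $P_0$ weighted by $\log L$ does not convert into an additive error of order $\varepsilon\log L$ on $-\frac{1}{n}\log\mu(B_n(x,r))$. (At best it bounds the radius inflation, and enlarging the radius of the dynamical ball gives an inequality in the wrong direction.)

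The paper resolves exactly this difficulty by a different device, Ledrappier's Proposition \ref{KeyPropLed}: one takes a \emph{countable} partition $\widehat{\mathcal{P}}$ of $K$ with \emph{finite entropy}, whose atoms have diameters shrinking like $rL^{-m}$ with $m$ adapted to the length of the forthcoming excursion, so the Lipschitz expansion is absorbed \emph{before} the excursion starts and the inclusion $\mathcal{P}^{n_k}(x)\subset B_{n_k}(x,r)$ holds exactly along the return times $n_k$ of $x$ to $K$, with no error term. The finite entropy of this countable partition (a consequence of Kac's lemma: the cost of refining near points with long excursions is summable against the return-time distribution) is what keeps Shannon--McMillan--Breiman applicable; and having the inclusion only along the subsequence $(n_k)$ suffices because one compares the SMB \emph{limit} for $\mathcal{P}$ with the $\liminf$ defining $\underline{h}^{loc}_{\mu}(T)$, exactly as in inequality \eqref{LELE0}. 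So the Lipschitz hypothesis is used in the construction of the partition, not in an a posteriori excursion estimate; to complete your argument you would have to replace your finite partition by such a countable, excursion-adapted one.
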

\begin{proof} As consequence of Theorem \ref{InfLocalEntropyMTEntropy}, we only need to prove that $h_{\mu}(T)\geq\underline{h}^{loc}_{\mu}(T)$. This follows from Proposition \ref{KeyPropLed} below.

\begin{proposition}[Ledrappier \cite{Ledrappier2}, Proposition 6.3]\label{KeyPropLed} Let $T:M\to M$ be a Lipschitz transformation of a complete Riemannian manifold and $\mu$ an ergodic $T$-invariant probability measure. Then, for every compact set $K\subset M$ such that $\mu(K)>0$ and all $0<r<1$, there exists a partition $\widehat{\mathcal{P}}$ of $K$ with finite entropy such that, if $\mathcal{P}=\widehat{\mathcal{P}}\cup\{M\setminus K\}$, then for $\mu$-almost every $x\in K$ the sequence $(n_{k})_{k\geq 0}$ of return times of $x$ into $K$ satisfies
\begin{gather*}\label{LELE1}
\mathcal{P}^{n_{k}}(x)\subset B_{n_{k}}(x,r),
\end{gather*}
for all $k\geq 0$.
\end{proposition}

Let $\mathcal{P}$ be as in Proposition \ref{KeyPropLed}. Using the ergodicity of $\mu$, for $\mu$-a.e. $x\in M$ there exists an integer $k>0$ such that $T^{-k}x\in K$. We know from the construction of $\mathcal{P}$ that the inclusion $\mathcal{P}^{n}(T^{-k}x)\subset B_{n}(T^{-k}x,r)$ is satisfied for infinitely many integers $n$. In particular, we deduce that $\mathcal{P}^{n+k}(x)\subset T^{k}B_{n}(T^{-k}x,r)$ is also satisfied for infinitely many $n$'s. Therefore
\begin{eqnarray*}
\limsup_{n\to\infty}-\frac{1}{n}\log \mu(\mathcal{P}^{n+k}(x))&\geq & \liminf_{n\to\infty}-\frac{1}{n}\log \mu(T^{k}B_{n}(T^{-k}x,r))\\
&=& \liminf_{n\to\infty}-\frac{1}{n}\log \mu(B_{n}(T^{-k}x,r))\\
&\geq& \liminf_{n\to\infty}-\frac{1}{n}\log \mu(B_{n-k}(x,r)).
\end{eqnarray*}
Hence, using Shannon-McMillan-Breiman Theorem, we get for $\mu$-a.e. $x\in M$
\begin{gather}\label{LELE0}
\lim_{n\to\infty}-\frac{1}{n}\log \mu(\mathcal{P}^{n}(x))\geq \liminf_{n\to\infty}-\frac{1}{n}\log \mu(B_{n}(x,r)).
\end{gather}
By Lemma \ref{LemmaEntLocal1} and Monotone Convergence Theorem, for every $\varepsilon>0$ there exists $r_{0}>0$ such that for all $0<r<r_{0}$, we have
$$\int \liminf_{n\to\infty}-\frac{1}{n}\log \mu(B_{n}(x,r)) d\mu(x) \geq \underline{h}^{loc}_{\mu}(T)-\varepsilon.$$
Using inequality \ref{LELE0} and once more Shannon-McMillan-Breiman Theorem, we obtain
\begin{eqnarray*}
h_{\mu}(T)&\geq& h_{\mu}(T,\mathcal{P})=\int \lim_{n\to\infty} -\frac{1}{n}\log \mu(\mathcal{P}^{n}(x))d\mu(x)\\
&\geq& \int \liminf_{n\to\infty}-\frac{1}{n}\log \mu(B_{n}(x,r)) d\mu(x)\geq \underline{h}^{loc}_{\mu}(T)-\varepsilon.\\
\end{eqnarray*}
Finally, the desired inequality follows when $\varepsilon\to 0$.
\end{proof}

In some cases, the study of the ergodic theory of a dynamical system is simpler by restricting ourselves to the dynamic on compact subsets. In this direction, for Lipschitz transformations we get to describe the measure-theoretic entropy as a sort of ``maximal local entropy on compact sets''.

\begin{theorem}\label{thm01} Let $T:M\to M$ be a Lipschitz transformation of a complete Riemannian manifold and $\mu$ an ergodic $T$-invariant probability measure on $M$. Then
\begin{gather*}\label{eq:thm01}
h_{\mu}(T)= \sup_{K}\esssup_{x\in K}\lim_{r\to 0}\limsup_{\stackrel{n\to\infty}{T^{n}x\in K}}-\frac{1}{n}\log \mu(B_{n}(x,r)),
\end{gather*}
where the supremum is taken over all the compact subsets $K$ of $M$ having positive $\mu$-measure.
\end{theorem}
\begin{proof}
On the one hand, Theorem \ref{InfLocalEntropyMTEntropy} says that
$$h_{\mu}(T)\leq \underline{h}^{loc}_{\mu}(T),$$
and, on the other hand, by definition of local entropy, we have
\begin{equation}\label{eq:sol_thm01}
\underline{h}^{loc}_{\mu}(T) \leq \sup_{K}\esssup_{x\in K}\lim_{r\to 0}\limsup_{\stackrel{n\to\infty}{T^{n}x\in K}} -\frac{1}{n}\log \mu(B_{n}(x,r)).
\end{equation}
We claim that the right-hand side of inequality \ref{eq:sol_thm01} is less than $h_{\mu}(T)$. Indeed, since $T$ is Lipschitz, Proposition \ref{KeyPropLed} and Shannon-McMillan-Breiman Theorem imply that, for every compact set $K\subset M$ such that $\mu(K)>0$, and every $0<r<1$, we have
\begin{gather*}\label{ModifiedLocalEntropy2}
\limsup_{\stackrel{n\to\infty}{T^{n}x\in K}} -\frac{1}{n}\log \mu(B_{n}(x,r))\leq h_\mu(T,\mathcal{P}) \leq h_\mu(T),
\end{gather*}
for $\mu$-a.e. $x\in K$. Clearly this implies the desired inequality, which concludes the proof of Theorem \ref{thm01}.
\end{proof}

\begin{remark} Theorems \ref{InfLocalEntropyMTEntropy} and \ref{thm01} seem similar but are different in a key point. Theorem \ref{InfLocalEntropyMTEntropy} deals with the lower local entropy whereas Theorem \ref{thm01} deals with the upper local entropy.
\end{remark}

To end this subsection, and for completeness of this whole section, we give a relation between the upper $\delta$-entropy and the upper local entropy in a general setting.

\begin{theorem}\label{KatokEntLocEntThm} Let $T:X\to X$ be a continuous transformation of a complete and separable metric space $(X,d)$ and $\mu$ an ergodic $T$-invariant probability measure. Then for all $0<\delta<1$, we have
$$\overline{h}^{\delta}_{\mu}(T) \leq \overline{h}^{loc}_{\mu}(T).$$
\end{theorem}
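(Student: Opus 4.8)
The plan is to show that the essential supremum $h := \overline{h}^{loc}_{\mu}(T)$ dominates $\overline{h}^{\delta}_{\mu}(T)$ by a direct counting argument, converting the pointwise lower bounds on ball measures coming from the local entropy into an upper bound on $N_\mu(n,r,\delta)$. We may assume $h<\infty$, otherwise there is nothing to prove. For $x\in X$ write $\overline{h}^{loc}_{\mu}(T,x)=\lim_{r\to 0}\limsup_{n\to\infty}-\tfrac1n\log\mu(B_n(x,r))$. The first observation I would record is a monotonicity in $r$: since larger radii produce larger balls (hence larger measure), the map $r\mapsto \limsup_{n\to\infty}-\tfrac1n\log\mu(B_n(x,r))$ is nonincreasing, so its limit as $r\to 0$ is a supremum over $r>0$. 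Consequently, for every fixed $r>0$ and every $x$,
$$\limsup_{n\to\infty}-\frac1n\log\mu(B_n(x,r))\leq \overline{h}^{loc}_{\mu}(T,x),$$
and the right-hand side is $\leq h$ for $\mu$-almost every $x$ by definition of the essential supremum.

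Next I would fix $\varepsilon>0$ and $r>0$ and build a set of good centers. For each integer $N$ define
$$A_N=\Bigl\{x\in X:\ \mu\bigl(B_n(x,r/2)\bigr)\geq e^{-n(h+\varepsilon)}\ \text{ for all } n\geq N\Bigr\}.$$
By the displayed inequality applied with radius $r/2$, for $\mu$-a.e.\ $x$ one has $\limsup_n-\tfrac1n\log\mu(B_n(x,r/2))\leq h$, so $x\in A_N$ for some $N$; hence $A_N$ increases to a set of full measure and, by continuity from below, I may choose $N=N(\varepsilon,r)$ with $\mu(A_N)>1-\delta$.

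The heart of the argument is then a separated-set estimate, carried out for each fixed $n\geq N$. Let $E=\{x_1,\dots,x_m\}$ be an $(n,r)$-separated subset of $A_N$ of maximal cardinality. The balls $B_n(x_i,r/2)$ are pairwise disjoint: if $y\in B_n(x_i,r/2)\cap B_n(x_j,r/2)$ then $d_n(x_i,x_j)<r$, contradicting separation. Since each $x_i\in A_N$ and $n\geq N$, each such ball has measure $\geq e^{-n(h+\varepsilon)}$, so disjointness and $\mu(X)=1$ force $m\,e^{-n(h+\varepsilon)}\leq 1$, i.e.\ $m\leq e^{n(h+\varepsilon)}$. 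In particular every $(n,r)$-separated subset of $A_N$ is finite, so a maximal one exists; and by maximality the balls $\{B_n(x_i,r)\}_{i=1}^m$ cover $A_N$ (any uncovered point could be adjoined to $E$). Since $\mu(A_N)>1-\delta$, this covering witnesses $N_\mu(n,r,\delta)\leq m\leq e^{n(h+\varepsilon)}$. Taking $\limsup_{n\to\infty}\tfrac1n\log$ gives $\limsup_{n\to\infty}\tfrac1n\log N_\mu(n,r,\delta)\leq h+\varepsilon$ for every $r>0$; letting $r\to0$ and then $\varepsilon\to0$ yields $\overline{h}^{\delta}_{\mu}(T)\leq h$, as desired.

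The step requiring the most care is the uniform choice of $N$: one must pass from the $\mu$-a.e.\ pointwise statement to a single threshold valid on a set of measure $>1-\delta$, which is exactly what the exhaustion $A_N\nearrow$ (full measure) provides. I would also stress that no compactness of $X$ is used, the finiteness of maximal separated sets being forced entirely by the disjointness-plus-measure bound rather than by any covering-compactness property.
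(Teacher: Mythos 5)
Your proof is correct and follows essentially the same route as the paper's: both convert the almost-everywhere bound $\overline{h}^{loc}_{\mu}(T,x)\leq h$ into a uniform lower bound $\mu(B_n(x,\cdot))\geq e^{-n(h+\varepsilon)}$ on a set of measure $>1-\delta$, then bound a maximal $(n,r)$-separated set via disjointness of the half-radius balls and use maximality to obtain a covering. The only differences are cosmetic: you inline the covering-versus-separated comparison instead of citing Lemma \ref{CardCovCardSep}, you skip the reduction to a compact subset (correctly observing it is not needed), and your explicit use of radius $r/2$ in the definition of $A_N$ tidies up a small radius mismatch in the paper's displayed sum.
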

\begin{proof} Fix $\varepsilon>0$ and $0<r<1$. Define the set $X(\varepsilon,r,m)\subset X$, for $m\geq 1$, by
$$X(\varepsilon,r,m)=\left\{x\in X : -\frac{1}{n}\log \mu(B_{n}(x,r/2))\leq \overline{h}^{loc}_{\mu}(T)+\varepsilon, \textmd{ for all } n\geq m\right\}.$$
Note that $\mu(X(\varepsilon,r,m))$ goes to $1$ when $m\to\infty$ and $r\to 0$. Take $r>0$ small enough and $m_{0}=m_{0}(r)>0$ large enough such that $\mu(X(\varepsilon,r,m))>1-\delta$ for every $m\geq m_{0}$. Let $K\subset X(\varepsilon,r,m_{0})$ be a compact set such that $\mu(K)>1-\delta$. We are going to find an upper bound of $S(n,r,K)$ (the maximal cardinality of a $(n,r)$-separated subset of $K$), for every $n\geq m_{0}$. Let $E$ be a maximal $(n,r)$-separated set in $K$. Since $(n,r/2)$-balls centered at $E$ are disjoint, we have
$$\sum_{x\in E}\mu(B_{n}(x,r/2))=\mu\left(\bigcup_{x\in E}B_{n}(x,r/2)\right)\leq 1.$$
As the $(n,r/2)$-balls with center in $K$ satisfy $\mu(B_{n}(x,r/2))\geq \exp\left(-n\left(\overline{h}^{loc}_{\mu}(T)+\varepsilon\right)\right)$, it follows that
$$\# E = S(n,r,K)\leq \exp\left(n\left(\overline{h}^{loc}_{m}(T)+\varepsilon\right)\right).$$
Therefore, by Lemma \ref{CardCovCardSep}, we get
\begin{eqnarray*}
\limsup_{n\to \infty}\frac{1}{n}\log N(n,r,K)&\leq& \limsup_{n\to \infty}\frac{1}{n}\log S(n,r,K)\\
&\leq& \overline{h}^{loc}_{\mu}(T)+\varepsilon.
\end{eqnarray*}
Hence, for every $r>0$ small enough we can find a compact $K\subset X$ such that $\mu(K)>1-\delta$ and
$$\limsup_{n\to \infty}\frac{1}{n}\log N(n,r,K) \leq \overline{h}^{loc}_{\mu}(T)+\varepsilon.$$
In particular,
\begin{eqnarray*}
\overline{h}^{\delta}_{\mu}(T)&=& \lim_{r\to 0}\limsup_{n\to \infty}\frac{1}{n}\log N(n,r,\delta)\\
&\leq& \lim_{r\to 0}\limsup_{n\to \infty}\frac{1}{n}\log N(n,r,K)\\
&\leq& \overline{h}^{loc}_{\mu}(T)+\varepsilon.
\end{eqnarray*}
Since $\varepsilon>0$ is arbitrary, the conclusion follows.
\end{proof}

\subsection{A Riemannian local entropy for Riemannian manifolds}

Our goal now is to define a geometric entropy of a transformation with respect to an invariant probability measure $\mu$. We do this by measuring the Riemannian volume $\mathcal{L}$ of $\mu$-typical dynamical balls. Moreover, we want to be able to compare it with the measure-theoretic entropy. It turns out that the essential supremum of the exponential decay for the Riemannian measure of $\mu$-typical dynamical balls is the ``good'' quantity to consider.

\begin{definition}\label{localRiemannianEntropyDef} Let $T:M\to M$ be a continuous transformation of a complete Riemannian manifold, preserving an ergodic $T$-invariant probability measure $\mu$. For every compact set $K\subset M$ verifying $\mu(K)>0$, we define the \textit{local Riemannian entropy of} $T$ relative to $\mu$ over $K$, denoted by $h^{\mathcal{L}}_{\mu}(T,K)$, as
$$h^{\mathcal{L}}_{\mu}(T,K)=\esssup_{x\in K}\lim_{r\to 0}\limsup_{\stackrel{n\to\infty}{T^{n}x\in K}} -\frac{1}{n}\log \mathcal{L}(B_{n}(x,r)),$$
where the essential supremum is with respect to the measure $\mu$. We define the \textit{local Riemannian entropy of} $T$ relative to $\mu$, denoted by $h^{\mathcal{L}}_{\mu}(T)$, as
$$h^{\mathcal{L}}_{\mu}(T)=\sup_{K}h^{\mathcal{L}}_{\mu}(T,K),$$
where the supremum is taken over all the compact subsets $K$ of $M$ having positive measure with respect to $\mu$.
\end{definition}

The following theorem shows that the lower $\delta$-entropy of Katok is bounded from above by the local Riemannian entropy. We stress the fact that the measure $\mu$ only appears in the definition of local Riemannian entropy when considering $\mu$-typical dynamical balls. For the best of our knowledge there are no related results in the literature.

\begin{theorem}\label{localRiemannianEntropyIneq} Let $T:M\to M$ be a continuous transformation of a complete Riemannian manifold, preserving an ergodic $T$-invariant probability measure $\mu$. If $K\subset M$ is a compact set of strictly positive $\mu$-measure, then for all $1-\mu(K)^{2}<\delta<1$, we have
$$\underline{h}^{\delta}_{\mu}(T)\leq h^{\mathcal{L}}_{\mu}(T,K).$$
\end{theorem}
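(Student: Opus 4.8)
Write $c=h^{\mathcal{L}}_{\mu}(T,K)$ and fix $\varepsilon>0$. The plan is to produce, along a subsequence of times $n$, a set of $\mu$-measure exceeding $1-\delta$ that admits an $(n,r)$-covering of cardinality at most $V\,e^{n(c+\varepsilon)}$, where $V=V(r,K)<\infty$. This forces $\liminf_{n}\frac{1}{n}\log N_\mu(n,r,\delta)\le c+\varepsilon$, and letting first $\varepsilon\to 0$ and then $r\to 0$ yields $\underline{h}^{\delta}_{\mu}(T)\le c$.

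The first step is the measure bookkeeping, which is where the threshold $1-\mu(K)^2$ enters, and which I expect to be the main obstacle. I must produce, along a single sequence of times, points that simultaneously return to $K$ (so the volume lower bound applies), lie in a set where the volume estimate is uniform, and whose total measure still exceeds $1-\delta$. Since $\mu$ is ergodic, Birkhoff's theorem gives $\frac{1}{N}\sum_{n=0}^{N-1}\mathbbm{1}_K(T^n x)\to\mu(K)$ for $\mu$-a.e. $x$, so by dominated convergence $\frac{1}{N}\sum_{n=0}^{N-1}\mu(K\cap T^{-n}K)=\int\mathbbm{1}_K(x)\big(\tfrac{1}{N}\sum_{n}\mathbbm{1}_K(T^n x)\big)\,d\mu\to\mu(K)^2$. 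Hence $\limsup_n\mu(K\cap T^{-n}K)\ge\mu(K)^2$, so there is a subsequence $n_j\to\infty$ with $\mu(K\cap T^{-n_j}K)>\mu(K)^2-\varepsilon'$; as $\delta>1-\mu(K)^2$, a small enough $\varepsilon'$ makes this quantity exceed $1-\delta$.

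Next I would uniformize the volume estimate. For $\mu$-a.e.\ $x\in K$ the quantity $\phi(x)=\lim_{r\to 0}\limsup_{n:\,T^n x\in K}-\frac{1}{n}\log\mathcal{L}(B_n(x,r))$ satisfies $\phi(x)\le c$, and since the inner limsup increases as $r\downarrow 0$, it is already $\le\phi(x)\le c$ for every fixed $r$. Therefore, for fixed $r$ the sets $G(n_0)=\{x\in K:\mathcal{L}(B_n(x,r/2))\ge e^{-n(c+\varepsilon)}\text{ for all }n\ge n_0\text{ with }T^n x\in K\}$ increase to a full-measure subset of $K$ as $n_0\to\infty$; I fix $n_0$ so that $\mu(K\setminus G(n_0))$ is small enough that $A'_{n_j}:=K\cap T^{-n_j}K\cap G(n_0)$ still has measure $>1-\delta$ for all large $j$. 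Now the packing argument: let $E\subset A'_{n_j}$ be a maximal $(n_j,r)$-separated set. By maximality the balls $\{B_{n_j}(x,r)\}_{x\in E}$ cover $A'_{n_j}$, so $N_\mu(n_j,r,\delta)\le\#E$; by separation the balls $\{B_{n_j}(x,r/2)\}_{x\in E}$ are pairwise disjoint. Since each center lies in the compact set $K$, the time-$0$ constraint forces every such ball into the $r/2$-neighborhood $K_{r/2}$ of $K$, whose Riemannian volume $V:=\mathcal{L}(K_{r/2})$ is finite. Combining the lower bound on $G(n_0)$ with disjointness gives $\#E\,e^{-n_j(c+\varepsilon)}\le\sum_{x\in E}\mathcal{L}(B_{n_j}(x,r/2))\le V$, hence $N_\mu(n_j,r,\delta)\le V\,e^{n_j(c+\varepsilon)}$.

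Finally, $\liminf_{n}\frac{1}{n}\log N_\mu(n,r,\delta)\le\liminf_{j}\frac{1}{n_j}\log N_\mu(n_j,r,\delta)\le c+\varepsilon$, and since $\varepsilon$ is arbitrary this holds with $c$ on the right for each fixed $r$; letting $r\to 0$ gives the conclusion. The delicate point throughout is that the volume lower bound is available only at return times $T^n x\in K$, which is precisely why the correlation average producing $\mu(K)^2$ (rather than the cruder bound $2\mu(K)-1$ coming from $\mu(K\cap T^{-n}K)\ge\mu(K)+\mu(T^{-n}K)-1$) is needed to meet the hypothesis on $\delta$; the geometric input ensuring finiteness of $V$ in the noncompact setting is the containment of dynamical balls centered in $K$ inside a fixed compact neighborhood.
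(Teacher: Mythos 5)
Your proposal is correct and follows essentially the same route as the paper: Birkhoff's theorem applied to $\mathbbm{1}_K$ to produce a subsequence of times with $\mu(K\cap T^{-n_j}K)$ close to $\mu(K)^2$ (the source of the threshold $1-\mu(K)^2$), uniformization of the volume lower bound on a large subset of $K$, and a maximal-separated-set packing argument using the finite Riemannian volume of an $r/2$-neighborhood of $K$. The only cosmetic difference is that you run Birkhoff on $K$ itself and intersect with the good set afterwards, whereas the paper first passes to a compact subset $K_0$ of the good set and lets its measure tend to $\mu(K)$; the content is identical.
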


\begin{proof} If $h^{\mathcal{L}}_{\mu}(T,K)=\infty$ there is nothing to prove. Suppose that $h^{\mathcal{L}}_{\mu}(T,K)<\infty$. For $\varepsilon>0$, $r>0$ and $m\geq 1$, we define the set $K_{\varepsilon,r,m}$ as
\begin{eqnarray*}
K_{\varepsilon,r}=\{x\in K &:& \mathcal{L}(B_{n}(x,r))\geq \exp(-n(h^{\mathcal{L}}_{\mu}(T,K)+\varepsilon)), \textmd{ for every } n\geq m\\
&& \textmd{such that } T^{n}x\in K\}.
\end{eqnarray*}
Note that the measure $\mu(K_{\varepsilon,r,m})$ goes to $\mu(K)$ when $m\to\infty$ and $r\to 0$. For all $0<\eta<\mu(K)/2$ there exist $r>0$ and $m_{0}\geq 1$ (depending on $r$) such that $\mu(K_{\varepsilon,r,m_{0}})>\mu(K)-\eta/2$. Let $K_{0}\subset K_{\varepsilon,r,m_{0}}$ be a compact set with measure $\mu(K_{0})> \mu(K)-\eta$. We are going to estimate the cardinality of a minimal $(n,r)$-covering of $K_{0}$ for $n\geq m_0$. The problem is that in general, if $x,x'\in K$ are different, the first time of return in $K$ is also different for these two points. The ergodicity assumption for the dynamical system will help us to erase this problem.

Birkhoff's Ergodic Theorem implies that $\frac{1}{n}\sum_{i=0}^{n-1}\mu(K_{0}\cap T^{-i}K_{0})$ converge to $\mu(K_{0})^{2}$. In particular, there is a sequence $(\phi(n))_{n}$ strictly increasing of integers such that $\mu(K_{0}\cap T^{-\phi(n)}K_{0})$ converge to $L(K_{0})\geq \mu(K_{0})^{2}$. Let $0<\lambda<L(K_{0})/2$. Then, there is an integer $n_0\geq 1$ such that $\mu(K_{0}\cap T^{-\phi(n)}K_{0})>L(K_{0})-\lambda$ for all $n\geq n_0$. Let $\delta(K_{0},\lambda)=1-(\mu(K_{0})^{2}-\lambda)$ and set $K_{\phi(n)}=K_{0}\cap T^{-\phi(n)}K_{0}$. The $\mu$-measure of $K_{\phi(n)}$ satisfies, for all $n\geq n_0$
$$\mu(K_{\phi(n)})>L(K_{0})-\lambda \geq \mu(K_{0})^{2}-\lambda=1-\delta(K_{0},\lambda).$$

Let $E$ be a maximal set $(\phi(n),r)$-separated in $K_{\phi(n)}$, for $n\geq \max\{m_0,n_0\}$. If $V_r(K)$ denotes the open $r$-neighbourhood of $K$, then
\begin{eqnarray*}
\mathcal{L}(V_{r}(K))&\geq& \mathcal{L}\left(\bigcup_{x\in E} B_{\phi(n)}(x,r/2)\right)\\
&\geq& \sum_{x\in E} \mathcal{L}(B_{\phi(n)}(x,r/2))\\
&\geq& \# E \exp(-n(h^{\mathcal{L}}_{\mu}(T,K)+\varepsilon)).
\end{eqnarray*}
Therefore, the cardinality of $E$ is bounded from above by
$$\# E \leq \mathcal{L}(V_{r}(K))\exp(n(h^{\mathcal{L}}_{\mu}(T,K)+\varepsilon)).$$
Hence, using Lemma \ref{CardCovCardSep} and the estimation from above of the cardinality of a maximal set $(\phi(n),r)$-separated in $K_{\phi(n)}$, we get
\begin{eqnarray*}
\liminf_{n\to\infty} \frac{1}{n}\log N(n,r,\delta(K_{0},\lambda)) &\leq& \liminf_{n\to\infty} \frac{1}{\phi(n)}\log N(\phi(n),r,\delta(K_{0},\lambda))\\
&\leq& \liminf_{n\to\infty} \frac{1}{\phi(n)}\log N(\phi(n),r,K_{\phi(n)})\\
&\leq& \liminf_{n\to\infty} \frac{1}{\phi(n)}\log S(\phi(n),r,K_{\phi(n)})\\
&\leq& h^{\mathcal{L}}_{\mu}(T,K)+\varepsilon.
\end{eqnarray*}
In particular, we have shown that for every $r>0$ small enough and every $n$ large enough (depending on $r$), there exists a compact set $K_{\phi(n)}$ of $\mu$-measure $\mu(K_{\phi(n)})\geq \delta(K_{0},\lambda)$. Therefore, the sequence of inequalities above implies that
\begin{eqnarray*}
\underline{h}^{\delta(K_{0},\lambda)}_{\mu}(T)&=& \lim_{r\to 0}\liminf_{n\to\infty} \frac{1}{n}\log N(n,r,\delta(K_{0},\lambda))\\
&\leq& h^{\mathcal{L}}_{\mu}(T,K)+\varepsilon.
\end{eqnarray*}
Since $\lambda>0$ is arbitrary and from Proposition \ref{ComparisonKatokEnt}, we have $\underline{h}^{\delta}_{\mu}(T)\leq h^{\mathcal{L}}_{\mu}(T,K)+\varepsilon$ for all $1-\mu(K_{0})^{2}<\delta<1$. Since $\eta>0$ is arbitrary, we have $\underline{h}^{\delta}_{\mu}(T)\leq h^{\mathcal{L}}_{\mu}(T,K)+\varepsilon$, for all $1-\mu(K)^{2}<\delta<1$. Since $\varepsilon>0$ is arbitrary, the conclusion of the theorem follows.
\end{proof}
\noindent
A direct consequence of Theorem \ref{localRiemannianEntropyIneq}, by choosing a sequence of compact sets $(K_n)_n$ such that $\mu(K_n)\to 1$ when $n\to 1$, is the following corollary.
\begin{corollary}\label{cor1localRiemannianEntropyIneq} Let $T:M\to M$ be a continuous transformation of a complete Riemannian manifold, preserving an ergodic $T$-invariant probability measure $\mu$. Then, for all $0<\delta<1$, we have
$$\underline{h}^{\delta}_{\mu}(T)\leq h^{\mathcal{L}}_{\mu}(T).$$
\end{corollary}

By Theorem \ref{KatokEntMTEntropy} we know that the measure-theoretic entropy is less than every $\delta$-entropy. This fact together with Corollary \ref{cor1localRiemannianEntropyIneq} imply 

\begin{corollary}\label{cor0} Let $T:M\to M$ be a continuous transformation of a complete Riemannian manifold, preserving an ergodic $T$-invariant probability measure $\mu$. Then
\begin{equation*}\label{eq:thm0}
h_{\mu}(f)\leq h^{\mathcal{L}}_{\mu}(T).
\end{equation*}
\end{corollary}

We have considered the Riemannian measure in the definition of local Riemannian entropy because we can always ensure that it gives positive measure to $\mu$-typical dynamical balls, regardless of the measure $\mu$. However, we might also have considered the measure $\mu$ as in the case of local entropies, and nothing in the proof of Corollary \ref{cor1localRiemannianEntropyIneq} changes. Hence, for all $0<\delta<1$, we have
\begin{equation*}\label{ModifiedLocalEntropy}
\underline{h}^{\delta}_{\mu}(T) \leq \sup_{K}\esssup_{x\in K}\lim_{r\to 0}\limsup_{\stackrel{n\to\infty}{T^{n}x\in K}} -\frac{1}{n}\log \mu(B_{n}(x,r)).
\end{equation*}

\begin{theorem}\label{thm:delta_mtentropy} Let $T:M\to M$ be a Lipschitz transformation of a complete Riemannian manifold and $\mu$ an ergodic $T$-invariant probability measure. Then, for all $0<\delta<1$, we have
$$h_\mu(T)=\underline{h}^{\delta}_{\mu}(T).$$
\end{theorem}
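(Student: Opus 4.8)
The plan is to close the chain of inequalities that the preceding results have already assembled, observing that the Lipschitz hypothesis is exactly what converts Katok's one-sided bound into an equality, uniformly in $\delta$.

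First I would establish the lower bound $h_\mu(T)\le\underline{h}^\delta_\mu(T)$ by a direct appeal to Theorem \ref{KatokEntMTEntropy}. Since a Lipschitz map is in particular continuous, and a complete Riemannian manifold is a complete metric space for its Riemannian distance, the hypotheses of that theorem are satisfied, and the inequality holds for every $0<\delta<1$. Note that this direction does not use the Lipschitz regularity beyond continuity.

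For the reverse bound I would concatenate inequality \eqref{ModifiedLocalEntropy} with Lemma \ref{lem:KeyLed}. The former controls the lower $\delta$-entropy by the supremum, over compact sets $K$ of positive measure, of the $\mu$-essential supremum of the exponential decay of $\mu(B_n(x,r))$ along the return times of $x$ to $K$; the latter bounds that very same quantity by $h_\mu(T)$. Explicitly, for every $0<\delta<1$,
$$\underline{h}^{\delta}_{\mu}(T)\leq \sup_{K}\esssup_{x\in K}\lim_{r\to 0}\limsup_{\stackrel{n\to\infty}{T^{n}x\in K}} -\frac{1}{n}\log \mu(B_{n}(x,r))\leq h_\mu(T),$$
where the first inequality is \eqref{ModifiedLocalEntropy} and the second is Lemma \ref{lem:KeyLed}. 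This gives $\underline{h}^\delta_\mu(T)\le h_\mu(T)$.

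Combining the two bounds yields the asserted equality. I would stress that the intermediate quantity above is manifestly independent of $\delta$, so no $\delta$-dependent loss is incurred and the identity $h_\mu(T)=\underline{h}^\delta_\mu(T)$ holds simultaneously for all $0<\delta<1$. There is no genuine obstacle remaining at this stage: the substantive work lives in the earlier results, and the sole place where the Lipschitz hypothesis is indispensable is Lemma \ref{lem:KeyLed}, through the finite-entropy partition furnished by Proposition \ref{KeyPropLed}. The only care required is to verify that each invoked statement's hypotheses (completeness, continuity, ergodicity, and Lipschitzness) are met here, which they are.
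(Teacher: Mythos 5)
Your proof is correct and follows essentially the same route as the paper: Theorem \ref{KatokEntMTEntropy} for $h_\mu(T)\leq\underline{h}^{\delta}_{\mu}(T)$, and Lemma \ref{lem:KeyLed} combined with inequality \eqref{ModifiedLocalEntropy} for the reverse bound. You merely make explicit the intermediate step via \eqref{ModifiedLocalEntropy} that the paper leaves implicit when it invokes Lemma \ref{lem:KeyLed}.
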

\begin{proof}
On the one hand, Theorem \ref{KatokEntMTEntropy} implies that $h_\mu(T)\leq\underline{h}^{\delta}_{\mu}(T)$. On the other hand, by last inequality in the proof of Theorem \ref{thm01}, we conclude that $\underline{h}^{\delta}_{\mu}(T)\leq h_\mu(T)$.
\end{proof}

We now can now summarize theorems \ref{LedrappierEqualityLocEntropy}, \ref{thm01} and \ref{thm:delta_mtentropy} in one single statement.

\begin{corollary}
Let $T:M\to M$ be a Lipschitz transformation of a complete Riemannian manifold and $\mu$ an ergodic $T$-invariant probability measure. Then
$$h_\mu(T)=\underline{h}^{\delta}_{\mu}(T)=\underline{h}^{loc}_{\mu}(T)=\sup_{K}\esssup_{x\in K} \lim_{r\to 0}\limsup_{\stackrel{n\to\infty}{T^{n}x\in K}} -\frac{1}{n}\log \mu(B_{n}(x,r)).$$
\end{corollary}

\section{Geodesic flow and Lyapunov exponents}

\subsection{Basic concepts} Let $X$ be a complete Riemannian manifold with dimension at least 2 and pinched negative sectional curvature (that is $-b^2\leq \kappa\leq -a^2<0$, where $0<a<b$). Let $T^{1}X$ be its unit tangent bundle. Recall that the Liouville measure $\mathcal{L}$ on $T^1X$ is the Riemannian volume of the Sasaki metric on $T^{1}X$. It is invariant under the action of the geodesic flow $(g^{t})$ on $T^{1}X$. Let $v\in T^{1}X$ and $t\in \R$. Denote by $E^{su}(v)$ the tangent space at $v$ of the corresponding strong unstable manifold\footnote{We refer to \cite{Ballmann} for proper definitions of the objects involved in this section and to \cite{PPS} for details in the noncompact setting.}. Denote by $J^{su}(v,t)$ the Jacobian of the linear map $d_{v}g^{t}|_{E^{su}(v)}$. The \emph{geometric potential} $F^{su}:T^{1}X\to \R$ is then defined by
$$F^{su}(v)=-\frac{d}{dt}\bigg{|}_{t=0}\log J^{su}(v,t).$$

\begin{theorem}[Paulin-Pollicott-Schapira \cite{PPS}, Theorem 7.1]\label{PotGeom} Let $X$ be a complete Riemannian mani\-fold with dimension at least 2 and pinched negative sectional curvature. Assume that the derivatives of the sectional curvature are uniformly bounded. Then $F^{su}$ is H\"older-continuous and bounded.
\end{theorem}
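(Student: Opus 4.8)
The plan is to reduce the computation of $F^{su}$ to the second fundamental form of the unstable horospheres and then to exploit comparison and contraction properties of the associated matrix Riccati equation. First I would recall the identification of the strong unstable distribution $E^{su}(v)$ with unstable Jacobi fields along the geodesic $t\mapsto g^tv$: a vector in $E^{su}(v)$ corresponds to a Jacobi field $J$ with $\|J(t)\|$ bounded as $t\to -\infty$. The unstable shape operator $U_v$, defined by $U_vJ(0)=J'(0)$ on these fields, is a positive symmetric endomorphism of $E^{su}(v)\cong\dot\gamma^{\perp}$ satisfying the matrix Riccati equation $U'+U^2+R=0$ along the geodesic, where $R(t)$ is the curvature operator $w\mapsto R(w,\dot\gamma)\dot\gamma$. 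Since $J^{su}(v,t)=\det\big(d_vg^t|_{E^{su}(v)}\big)$ is a volume ratio of unstable Jacobi fields, its logarithmic derivative is the trace of the shape operator, $\frac{d}{dt}\log J^{su}(v,t)=\operatorname{tr}U_{g^tv}$, so that
$$F^{su}(v)=-\operatorname{tr}U_v.$$
The entire statement thus becomes a statement about boundedness and H\"older regularity of the map $v\mapsto\operatorname{tr}U_v$.

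For boundedness I would invoke Riccati (Sturm) comparison. Under the pinching $-b^2\le K\le -1$, the constant solutions $U=a\,\mathrm{Id}$ of the comparison equations $U'+U^2-a^2=0$ for $a\in\{1,b\}$ trap the eigenvalues of $U_v$: applying the comparison argument in each eigendirection gives $\mathrm{Id}\le U_v\le b\,\mathrm{Id}$, hence
$$(\dim X-1)\le \operatorname{tr}U_v\le (\dim X-1)\,b .$$
Consequently $F^{su}(v)\in[-(\dim X-1)b,\,-(\dim X-1)]$ is bounded.

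The main work, and the main obstacle, is H\"older continuity. The strategy is to use the exponential contraction of the Riccati flow towards its unstable solution. Given $v,w\in T^1X$ with $d(v,w)$ small, backward geodesics stay close, $d(g^{-\sigma}v,g^{-\sigma}w)\le C\,e^{c\sigma}d(v,w)$ for $\sigma\ge 0$, with the divergence rate $c$ controlled by comparison from the curvature bounds. I would solve the Riccati equation with positive bounded data at time $-T$ (after identifying the two normal bundles along a short geodesic joining the footpoints by parallel transport) and flow forward to time $0$; the lower bound $U\ge\mathrm{Id}$ makes the linearization $W'=-(U_1W+WU_2)$ of the difference $W=U_1-U_2$ contracting with a uniform rate $\alpha\ge 1$, so each solution collapses onto the corresponding unstable solution with error $O(e^{-2\alpha T})$, while the remaining discrepancy is forced only by $\int_{-T}^{0}\|R_1(s)-R_2(s)\|\,ds$. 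This is exactly where the hypothesis on the derivatives of the curvature enters: the uniform bound on the covariant derivative of the curvature tensor yields the Lipschitz estimate $\|R_1(s)-R_2(s)\|\le L\,d(g^sv,g^sw)$, whence the forcing term is at most $LC\int_0^T e^{c\sigma}d(v,w)\,d\sigma\le C'e^{cT}d(v,w)$. Combining the two contributions gives $\|U_v-U_w\|\le C_1e^{-2\alpha T}+C_2e^{cT}d(v,w)$; optimizing $T\approx\frac{1}{2\alpha+c}\log\frac{1}{d(v,w)}$ produces $\|U_v-U_w\|\le C\,d(v,w)^{\theta}$ with $\theta=\frac{2\alpha}{2\alpha+c}\in(0,1)$, and taking traces yields the H\"older continuity of $F^{su}$. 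The delicate points are the uniform contraction rate $\alpha$ of the Riccati flow (from $U\ge\mathrm{Id}$ together with the pinching), the control of the geodesic divergence constant $c$ by comparison, and the Lipschitz control of $R$ along geodesics, which is precisely the role of the assumed bound on the derivatives of the sectional curvature.
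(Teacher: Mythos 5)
The paper does not prove this statement; it is quoted from Paulin--Pollicott--Schapira \cite{PPS}, and your argument is a faithful reconstruction of the standard proof given there: writing $F^{su}(v)=-\operatorname{tr}U_v$ for the unstable shape operator, Riccati comparison under the pinching $-b^2\le K\le -1$ for boundedness, and the exponential contraction of the Riccati flow combined with the Lipschitz control $\|R_1(s)-R_2(s)\|\le L\,d(g^sv,g^sw)$ (this is where the bounded derivatives of the curvature enter) for H\"older continuity. The only point deserving a little more care is the parallel-transport identification of the two normal bundles, which introduces additional error terms of the same order, controlled by the same curvature hypotheses; this does not affect the conclusion.
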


For $v\in T^1X$, let $\|\cdot\|_{v}$ denote the Riemannian norm on $T_{v}T^1X$. The vector $v$ is said to be (Lyapunov-Perron) \textit{regular} if there exist numbers $\{\lambda_{i}(v)\}_{i=1}^{l(v)}$, called \textit{Lyapunov exponents}, and a decomposition of the tangent space at $v$ into $T_{v}T^1X=\bigoplus_{i=1}^{l(v)}E_{i}(v)$ such that for every tangent vector $w\in E_{i}(v)\setminus\{0\}$, we have
$$\lim_{n\to\pm\infty}\frac{1}{n}\log \|d_{v}g^{n}(w)\|_{g^{n}v}=\lambda_{i}(v),$$
and
$$\lim_{n\to\pm\infty}\frac{1}{n}\log|\det(d_v g^n)|= \sum_{i=1}^{l(v)}\lambda_i(v)\Dim(E_i(v)).$$
Let $\Lambda$ be the set of regular vectors. By Oseledec's Theorem (see \cite{Oseledec}, \cite{Ledrappier3}) and comparison theorems in Riemannian geometry, if $\mu$ is any $g$-invariant probability measure on $T^1X$ (where $g=g^1$ is the time-one map of the geodesic flow), then the set $\Lambda$ has full $\mu$-measure. Moreover, the functions $v\mapsto \lambda_{j}(v)$ and $v\mapsto \dim(E_{j}(v))$ are $\mu$-measurable and $g$-invariant. In particular, if $\mu$ is ergodic, they are $\mu$-a.e. constant. In that case, we denote by $\{\lambda_{j}\}_{j=1}^{l}$ the set of Lyapunov exponents. 

Let $v\in \Lambda$. Note that by hyperbolicity, we get
$$E^{su}(v)=\bigoplus_{\lambda_{j}(v)>0} E_{j}(v).$$
Consider now the function $\chi^{+}: T^1X\to \R$ defined as 
$$\chi^{+}(v)=\sum_{\lambda_j(v)>0} \lambda_j(v)\dim(E_j(v))$$ 
if $v\in\Lambda$ and $\chi^{+}(v)=0$ otherwise. If $\mu$ is an ergodic $g$-invariant probability measure on $T^1X$, we denote by $\chi^{+}(\mu)$ (or simply $\chi^{+}$ when there is no ambiguity) the essential value of the function $\chi^{+}(v)$ with respect to $\mu$. 

The potential $F^{su}$ is intimately related to the Lyapunov exponents. Indeed, using the notation above, if $\mu$ is a $(g^t)$-invariant probability measure on $T^{1}X$, then
\begin{equation*}
\lim_{n\to+\infty}\frac{1}{n}\int_{0}^{n}F^{su}(g^{t}v)dt=-\lim_{n\to+\infty}\frac{1}{n}\log J^{su}(v,n)=-\chi^{+}(v), \quad \mu-\text{a.e}.
\end{equation*}
\noindent

The key fact that will allow us to prove Ruelle's inequality for the geodesic flow is the Gibbs property of the Liouville measure for the potential $F^{su}$ (see \cite[Proposition 7.9]{PPS}). Recall that a $(g^{t})$-invariant measure $m$ on $T^{1}X$ satisfies the Gibbs property for the potential $F:T^{1}X\to \R$ with constant $c(F)$ if and only if for every compact subset $K$ of $T^{1}X$, for every $r>0$, there exists $C=C(K,r)\geq 1$, such that for every $T\geq 0$, for every $v\in K\cap g^{-T}K$, we have
\begin{equation*}
C^{-1} \leq \dfrac{m(B_{T}(v,r))}{\exp(\int_{0}^{T}(F(g^{t}v)-c(F))dt)}\leq C.
\end{equation*}

\begin{proposition}[Paulin-Pollicott-Schapira \cite{PPS}, Proposition 7.9]\label{gibbsprop} Let $X$ be a complete Riemannian manifold with dimension at least 2 and pinched negative sectional curvature. Assume that the derivatives of the sectional curvature are uniformly bounded. Then the Liouville measure on $T^{1}X$ satisfies the Gibbs property for the potential $F^{su}$ and the constant $c(F^{su})=0$.
\end{proposition}

Note that the assumption on the derivatives of the sectional curvature is crucial. It implies in particular that the strong unstable and strong stable distributions are H\"older-continuous (see for instance \cite[Theorem 7.3]{PPS}), and therefore that $\mathcal{L}$ locally decomposes into a product of Lebesgue measures along unstable and stable manifolds (see \cite[Theorem 7.6]{PPS}). This last fact is the cornerstone in order to estimate the Liouville measure of a dynamical ball.

\subsection{Ruelle's inequality and Pesin's formula}\label{ss:RuellePesin} Let $X$ be a complete Riemannian manifold satisfying the assumptions of Theorem \ref{thm1}. By simplicity we will always consider in the proofs an ergodic $(g^{t})$-invariant probability measure $\mu$. The proofs of the theorems in the non-ergodic case are consequence of the ergodic decomposition of such a measure. We can also assume that $g=g^{1}$ is an ergodic transformation with respect to $\mu$. If it is not the case, then we can choose an ergodic time $\tau$ for $\mu$ (see \cite[Theorem 3.2]{LoSc}) and prove Theorem \ref{thm1} and Theorem \ref{thm2} for the map $g^{\tau}$. The validity of Theorem \ref{thm1} and Theorem \ref{thm2} for $g^{\tau}$ implies the validity of both theorems for $g$ since $h_{\mu}(g^{\tau})=\tau h_{\mu}(g)$ and the Lyapunov exponents of $g^{\tau}$ are $\tau$-multiples of the Lyapunov exponents for $g$.

\begin{proof}[Proof of Theorem \ref{thm1}] Let $K\subset T^{1}X$ be a compact set of measure $0<\mu(K)<1$. Since $F^{su}$ is $\mu$-integrable, Proposition \ref{gibbsprop} implies
\begin{eqnarray*}
h^{\mathcal{L}}_{\mu}(g,K) &=& \esssup_{v\in K}\lim_{r\to 0}\limsup_{\stackrel{n\to\infty}{g^{n}v\in K}} -\frac{1}{n}\log \mathcal{L}(B_{n}(v,r))\\
&\leq& \esssup_{v\in K}\lim_{r\to 0}\limsup_{\stackrel{n\to\infty}{g^{n}v\in K}} -\frac{1}{n}\log \left(C^{-1}\exp\left(\int_{0}^{n}F^{su}(g^{t}v)dt\right)\right)\\
&=& -\int F^{su} d\mu \\
&=& \chi^{+}(\mu).
\end{eqnarray*}
Last equality is a consequence of Birkhoff's Ergodic Theorem. Thus, $h^{\mathcal{L}}_{\mu}(g)\leq \chi^{+}$ and Corollary \ref{cor0} implies directly that
$$h_{\mu}(g)\leq \chi^{+}.$$
\end{proof}

The proof of Theorem \ref{thm2} is similar to those in \cite{LS}, \cite{Ledrappier} and \cite{LY}. We only need to corroborate that all technical hypotheses hold, for instance the H\"older regularity of strong unstable and strong stable distributions. As said before, these technical hypotheses are consequence of the assumption on the derivatives of the sectional curvature. In \cite{OP} the authors use the regularity of the strong unstable foliation to prove the existence of nice measurable partitions. They follow the ideas in \cite{LS} and \cite{Ledrappier} adapted to the geodesic flow in negative curvature. We stress that in \cite{OP} the authors use the H\"older regularity of strong unstable and strong stable foliations omitting the hypothesis on the derivatives of the sectional curvatures, even if it is necessary to ensure such regularity (we refer to \cite{BBB} where the authors construct a finite volume Riemannian surface with pinched negative sectional curvatures whose strong stable foliation is not H\"older-continuous).\\

Recall that a measurable partition $\xi$ of $T^{1}X$ is subordinate to the $W^{su}$-foliation if for $\mu$-a.e. $v\in T^{1}X$, we have
\begin{enumerate}
  \item[(i)] the atom $\xi(v)$ is contained in $W^{su}(v)$, and
  \item[(ii)] the atom $\xi(v)$ contains a neighborhood of $v$, open for the submanifold topology on $W^{su}(v)$.
\end{enumerate}
Let $\vol_{v}$ be the volume on $W^{su}(v)$ induced by the Sasaki metric on $T^1 X$ restricted to the strong unstable manifold $W^{su}(v)$. The measure $\mu$ has absolutely continuous conditional measures on unstable manifolds if for every $\mu$-measurable partition $\xi$ subordinate to $W^{su}$, the conditional measure $\mu_{\xi(v)}$ of $\mu$ on $\xi(v)$ is absolutely continuous with respect to $\vol_{v}$.

\begin{proposition}\label{keyLedStrLedYoung} Let $X$ be a complete Riemannian manifold with dimension at least 2 and pinched negative sectional curvature. Assume that the derivatives of the sectional curvature are uniformly bounded. Let $\mu$ an ergodic $(g^{t})$-invariant probability measure and suppose that $g=g^{1}$ is ergodic. Then, there exists a $\mu$-measurable partition $\xi$ of $T^{1}X$, such that
\begin{enumerate}
  \item[(1)] the partition $\xi$ is decreasing, i.e. $(g^{-1}\xi)(v)\subset\xi(v)$ for $\mu$-a.e. $v\in T^1 X$,
  \item[(2)] the partition $\bigvee_{n\geq 0} g^{-n}\xi$ is the partition into points,
  \item[(3)] the partition $\xi$ is subordinate to the $W^{su}$-foliation,
  \item[(4)] for $\mu$-a.e. $v$, we have $\bigcup_{n\in\Z}g^{n}\xi(g^{-n}v)=W^{su}(v)$,
  \item[(5)] for all measurable sets $B\subset T^{1}X$, the map
  $$\psi_{B}(v)=\mathrm{vol}_{v}(\xi(v)\cap B)$$
  is measurable and $\mu$-a.e. finite,
  \item[(6)] for $\mu$-a.e. $v\in T^{1}X$, if $w,w'\in \xi(v)$, then the infinite product
  $$\Delta(w,w')=\prod_{n=0}^{\infty} \dfrac{J^{su}(g^{-n}w,1)}{J^{su}(g^{-n}w',1)}$$
  converges, and
  \item[(7)] there exist constants $C>0$ and $0<\alpha<1$ such that, if $w\in\xi(v)$, then
  $$|\log\Delta(v,w)|\leq C(d(v,w))^{\alpha}.$$
\end{enumerate}
\end{proposition}
\begin{proof}
The existence of $\mu$-measurable partitions satisfying $(1)-(4)$ is proved in \cite{OP}. Properties $(5)-(7)$ are consequence of the regularity of the strong unstable distribution and the regularity of $J^{su}$, following the same proof of \cite[Proposition 3.1]{Ledrappier}.
\end{proof}

The class of $\mu$-measurable partitions satisfying $(1)-(4)$ contains somehow all the complexity of the dynamics of the geodesic flow in the sense that every partition in this class maximises the measure-theoretic entropy. This result is proved in \cite{OP} following the ideas in \cite{Ledrappier} and \cite{LY}.

\begin{proposition}[Ledrappier-Young/Otal-Peign\'e]\label{L/OP} Let $X$ be a complete Riemannian manifold with dimension at least 2 and pinched negative sectional curvature. Assume that the derivatives of the sectional curvature are uniformly bounded. Let $\mu$ be an ergodic $(g^{t})$-invariant probability measure and suppose that $g=g^{1}$ is ergodic. If $\xi$ is a partition as in Proposition \ref{keyLedStrLedYoung}, then
$$h_{\mu}(g)=h_{\mu}(g,\xi),$$
where
$$h_{\mu}(g,\xi):=H_{\mu}(g^{-1}\xi|\xi)=-\int \log \mu_{\xi(v)}((g^{-1}\xi)(v))d\mu(v).$$
\end{proposition}

\begin{proof}[Proof of Theorem \ref{thm2}] We remark that the computation of the entropy appears in \cite{LS}, but as this fact is not stated explicitly, we give the general idea behind. Suppose that $\mu$ has absolutely continuous conditional measures on unstable manifolds. Let $\xi$ be a $\mu$-measurable partition as in Proposition \ref{keyLedStrLedYoung}. We only have to prove that $h_{\mu}(g,\xi)=\chi^{+}$. This is equivalent to show that $H_{\mu}(g^{-1}\xi|\xi)=\int \log J^{su}(v,1) d\mu(v)$.\\
Define the measure $\nu$ on $T^{1}X$ by
$$\nu(B)=\int \vol_{w}(\xi(w)\cap B) d\mu(w),$$
for every measurable subset $B$ of $T^{1}X$. Property (5) in Proposition \ref{keyLedStrLedYoung} implies that $\nu$ is $\sigma$-finite. Since $\mu_{\xi(v)}$ is absolutely continuous with respect to $\vol_{v}$, the measure $\mu$ is absolutely continuous with respect to $\nu$. Moreover, the Radon-Nikodym derivative $\kappa=d\mu/d\nu$ coincide with $d\mu_{\xi(v)}/d\vol_{v}$, $\vol_{v}$-almost everywhere on $\xi(v)$, for $\mu$-almost every $v\in T^{1}X$ (see \cite[Proposition 4.1]{LS}). Thus,
$$-\log \mu_{\xi(v)}((g^{-1}\xi)(v))=-\log \int_{(g^{-1}\xi)(v)}\kappa(w)d\vol_{v}(w).$$
Using Change of Variables Theorem, it follows
$$\int_{(g^{-1}\xi)(v)}\kappa(w)d\vol_{v}(w)=\int _{\xi(gv)} \kappa(g^{-1}w)\frac{1}{J^{su}(g^{-1}w,1)}d\vol_{gv}(w).$$
From \cite[Proposition 4.2]{LS}, the application $L(w)=\frac{\kappa(w)}{\kappa(g^{-1}w)} J^{su}(g^{-1}w,1)$ is constant on the atoms of the partition $\xi$. Therefore,
\begin{eqnarray*}
\int _{\xi(gv)} \kappa(g^{-1}w)\frac{1}{J^{su}(g^{-1}w,1)}d\vol_{gv}(w)&=& \int _{\xi(gv)} \frac{\kappa(w)}{L(w)}d\vol_{gv}(w)\\
&=& \frac{1}{L(gv)} \int_{\xi(gv)}\kappa(w)d\vol_{gv}(w)\\
&=& \frac{1}{L(gv)} \int_{\xi(gv)} d\mu_{\xi(gv)}(w)\\
&=& \frac{1}{L(gv)}.
\end{eqnarray*}
Putting all together, we have shown that 
\begin{equation}\label{eq:pesin1}
-\log \mu_{\xi(v)}((g^{-1}\xi)(v))=\log J^{su}(v,1)+\log\frac{\kappa(gv)}{\kappa(v)}.
\end{equation}
Since the left hand side in \ref{eq:pesin1} is non-negative and $\log J^{su}(v,1)$ is $\mu$-integrable, it follows that the negative part of $\log\frac{\kappa(gv)}{\kappa(v)}$ is $\mu$-integrable. In particular, its $\mu$-integral is equal to zero (see \cite[Proposition 2.2]{LS}), thus
$$h_{\mu}(g)=-\int \log \mu_{\xi(v)}((g^{-1}\xi)(v))d\mu(v) = \int \log J^{su}(v,1)d\mu(v)=\chi^{+}.$$

The converse statement is just the conclusion of \cite[Theorem 3.4]{Ledrappier} under the hypothesis obtained in Proposition \ref{L/OP}, for a $\mu$-measurable partition $\xi$ as in Proposition \ref{keyLedStrLedYoung}.
\end{proof}

\subsection{Further comments} We discuss now some consequences of Theorem \ref{thm1} in thermodynamic formalism. The \emph{topological pressure of} $(g^{t})$ \emph{for a potential} $F:T^{1}X\to \R$, denoted by $P_g(F)$ (or simply $P(F)$), is defined as
$$P(F)=\sup_{\mu} P(F,\mu),$$
where $P(F,\mu)=h_{\mu}(g)+\int_{T^{1}X} F d\mu$ and $\mu$ is an $(g^t)$-invariant probability measure on $T^1 X$. A $(g^t)$-invariant probability measure $m$ on $T^1 X$ is said to be an \emph{equilibrium state for} $F$, if
$$P(F)= P(F,m).$$
In \cite{PPS} the authors construct a Gibbs measure for every bounded H\"older-continuous potential $F$, with constant $c(F)$ equal to the topological pressure $P(F)$. We remark that if a Gibbs measure is finite, its normalization is the unique equilibrium state for the potential, and if infinite, there is no equilibrium state for $F$ (see \cite[Theorem 6.1]{PPS}). Note that, as a consequence of Theorem \ref{PotGeom}, there exists a Gibbs measure for $F^{su}$ under the hypotheses of Theorem \ref{thm1}, which is denoted by $m_{F^{su}}$.

Observe now that, in terms of thermodynamical formalism, Ruelle's inequality can be stated as
\begin{corollary}\label{cor0} Let $X$ be a complete Riemannian manifold with dimension at least 2 and pinched negative sectional curvature. Assume that the derivatives of the sectional curvature are uniformly bounded. Then, for every $(g^{t})$-invariant probability measure $\mu$ on $T^1 X$, we have
\begin{equation}\label{eq:PRuelle}
P(F^{su},\mu)\leq 0.
\end{equation}
\end{corollary}

In particular, we can remove inequality \ref{eq:PRuelle} as a redundant assumption in \cite[Theorem 7.2]{PPS} and obtain Corollary \ref{cor1} below. Recall that the geodesic flow is conservative with respect to a finite or infinite measure $m$ on $T^1X$ if every wandering set has $m$-measure zero.

\begin{corollary}\label{cor1} Let $X$ be a complete Riemannian manifold with dimension at least 2 and pinched negative sectional curvature. Assume that the derivatives of the sectional curvature are uniformly bounded. If the geodesic flow on $T^{1}X$ is conservative with respect to the Liouville measure $\mathcal{L}$, then $\mathcal{L}$ is proportional to the Gibbs measure $m_{F^{su}}$ associated to the geometric potential $F^{su}$. Furthermore, the topological pressure $P(F^{su})$ is equal to zero.
\end{corollary}
As a direct consequence, we also have
\begin{corollary}\label{cor2} Let $X$ be a complete Riemannian manifold with dimension at least 2 and pinched negative sectional curvature. Assume that the derivatives of the sectional curvature are uniformly bounded. If $X$ has finite volume, then
$$\frac{m_{F^{su}}}{m_{F^{su}}(T^1 X)}=\frac{\mathcal{L}}{\mathcal{L}(T^{1}X)}.$$
\end{corollary}

Finally, we remark that Corollary \ref{cor:summary_Ruelle} follows directly from all the statements in this last subsection.

\end{document}